\newtheorem{theorem}{Theorem}[section]
\newtheorem{lemma}{Lemma}[section]
\newtheorem{corollary}{Corollary}[section]
\newenvironment{proof}
      {\medskip\noindent{\bf Proof:}\hspace{1mm}}
      {\hfill$\Box$\medskip}
\def\qed{\ifvmode\mbox{ }\else\unskip\fi\hskip 1em plus 10fill$\Box$}
\def\Ddots{\mathinner{\mkern1mu\raise\p@
\vbox{\kern7\p@\hbox{.}}\mkern2mu
\raise4\p@\hbox{.}\mkern2mu\raise7\p@\hbox{.}\mkern1mu}}
\def\ZZ{\mathbb{Z}}
\title{\vspace{-0.7cm}Hypergraph expanders from Cayley graphs}
\author{David Conlon\thanks{Mathematical Institute, Oxford OX2 6GG,
United Kingdom. Email: {\tt david.conlon@maths.ox.ac.uk}. Research
supported by a Royal Society University Research Fellowship and by ERC Starting Grant 676632.}}
\date{}
\begin{document}
\maketitle

\begin{abstract}
We present a simple mechanism, which can be randomised, for constructing sparse $3$-uniform hypergraphs with strong expansion properties. These hypergraphs are constructed using Cayley graphs over $\mathbb{Z}_2^t$ and have vertex degree which is polylogarithmic in the number of vertices. Their expansion properties, which are derived from the underlying Cayley graphs, include analogues of vertex and edge expansion in graphs, rapid mixing of the random walk on the edges of the skeleton graph, uniform distribution of edges on large vertex subsets and the geometric overlap property.
\end{abstract}

\section{Introduction}

Expander graphs are now ubiquitous in both mathematics and computer science. The problem of explicitly constructing these highly connected sparse graphs has drawn the attention of researchers from across both disciplines, who have uncovered deep and surprising connections to topics as diverse as Kazhdan's property (T) and the Ramanujan conjecture. Their usefulness has been known to computer scientists for some time, who have applied them to complexity theory, derandomisation, coding theory, cryptography and more, but they are now seeing increasing use in disparate areas of mathematics. We refer the interested reader to the excellent surveys~\cite{HLW06} and~\cite{L12} for further information. 

Given these successes, there has been a strong push in recent years towards defining and constructing high-dimensional, or hypergraph, expanders. There has already been a great deal of interesting work in this area (see, for example, the survey~\cite{L14}), but much more remains to be done. In particular, there are only a small number of examples known which satisfy the strongest notions of expansion. In the bounded-degree case, the standard examples are the Ramanujan complexes~\cite{LSV05}, defined in analogy to Ramanujan graphs~\cite{LPS88, M88} as finite quotients of certain affine buildings. These objects have remarkable properties, but can be somewhat difficult to analyse. The main result of this paper is a comparatively simple mechanism for constructing $3$-uniform expanders of low degree which satisfy many of the expansion properties discussed in the literature and which might prove interesting for further study. To say more, we first describe the mechanism.

Let $S$ be a subset of the finite abelian group $\mathbb{Z}_2^t$. We then let $H \coloneqq H(\ZZ_2^t, S)$ be the $3$-uniform hypergraph with vertex set $\mathbb{Z}_2^t$ and edge set consisting of all triples of the form $(x + s_1, x+ s_2, x+ s_3)$, where $x \in \mathbb{Z}_2^t$ and $s_1, s_2, s_3$ are distinct elements of $S$.  A useful alternative perspective on $H$ is to consider the Cayley graph Cay$(\mathbb{Z}_2^t, S')$, where $S'$ is the set $\{s_1 + s_2 : s_1, s_2 \in S, s_1 \neq s_2\}$. This is the graph with vertex set $\ZZ_2^t$ where $x, y \in \ZZ_2^t$ are joined if and only if $x + y \in S'$. Then $H$ is a $3$-uniform hypergraph on the same vertex set whose triples correspond to certain degenerate triangles in Cay$(\mathbb{Z}_2^t, S')$. Note that if $S$ contains no non-trivial solutions to the equation $s_1 + s_2 = s'_1 + s'_2$, then every vertex in $H$ is contained in exactly $3\binom{|S|}{3}$ edges and every pair of vertices is contained in either $0$ or $2|S| - 4$ edges.

We will see that this hypergraph inherits its expansion properties from the Cayley graph Cay$(\mathbb{Z}_2^t, S)$. However, when $|S| < t$, the Cayley graph Cay$(\mathbb{Z}_2^t, S)$ is not even connected, showing that $|S|$ will have to be at least logarithmic in the number of vertices. On the other hand, a celebrated result of Alon and Roichman~\cite{AR94} shows that logarithmic size will also suffice, in that if $S$ is a randomly chosen subset of $\ZZ_2^t$ of size $C t$, for $C$ sufficiently large, then Cay$(\mathbb{Z}_2^t, S)$ will, with high probability as $t \rightarrow \infty$, be an expander. The set $S$ may also be chosen explicitly, but the fact that a random choice works partially addresses a question raised repeatedly in the literature~\cite{EK17, P14, P17} as to whether there are random models for sparse high-dimensional expanders. 

Since random Cayley graphs over many other groups are known to have much better expansion properties (see~\cite{BGGT15} and its references), one might ask why we use $\ZZ_2^t$. To see why, we define, for each $x \in \ZZ_2^t$, the set $C_x := \{x + s : s \in S\}$. The hypergraph $H$ is then a union of $3$-uniform cliques, one on each set $C_x$. The key observation about $H$, and the reason why it serves as a hypergraph expander, is that each edge in Cay$(\mathbb{Z}_2^t, S')$ appears in at least two different sets of the form $C_x$. More specifically, if $(x, x + s_1 + s_2)$ is an edge of Cay$(\mathbb{Z}_2^t, S')$, then this edge is contained in both $C_{x+s_1}$ and $C_{x+s_2}$. This property, for which the fact that $\ZZ_2^t$ is abelian is crucial, means, for instance, that a random walk on the edges of $H$ never becomes trapped inside a particular set $C_x$.

To say more about the notions of expansion satisfied by our construction, we require some notation. Given a $3$-uniform hypergraph $H$, we let $E$ be the collection of pairs of distinct vertices $(u, v)$ for which there exists an edge $(u, v, w)$ of $H$ containing $u$ and $v$. The corresponding graph will be referred to as the {\it skeleton} of $H$. To distinguish the edges of $H$ from the set $E$, we will write $T$ for these edges and refer to them as the triples of $H$, reserving the term edges for the elements of $E$.

For any subset $F$ of the skeleton $E$ of a hypergraph $H$, we may define two notions of neighbourhood, the \emph{edge neighbourhood $N_E(F)$} and the \emph{triple neighbourhood $N_T(F)$}, by
\[N_E(F) = \{e \in E \setminus F: e \cup f \in T \textrm{ for some } f \in F\}\]
and, writing $\Delta(F)$ for the set of triangles in $F$,
\[N_T(F) = \{t \in T: t \supset f \textrm{ for some } f \in F \textrm{ and } t \notin \Delta(F)\}.\]
We then let
\[h_E(H) = \min_{\{F : |F| \leq |E|/2\}} \frac{|N_E(F)|}{|F|} \textrm{ and } h_T(H) = \min_{\{F : |F| \leq |E|/2\}} \frac{|N_T(F)|}{|F|}\]
and say that $H$ is an {\it $\epsilon$-edge-expander} if $h_E(H) \geq \epsilon$ and an {\it $\epsilon$-triple-expander} if $h_T(H) \geq \epsilon$. Note that these notions are the direct analogues of the standard notions of vertex and edge expansion in graphs. Our main result is that under suitable conditions on Cay$(\ZZ_2^t, S)$, the hypergraph $H(\ZZ_2^t, S)$ is an $\epsilon$-edge-expander and an $\epsilon |S|$-triple-expander for some $\epsilon > 0$. 

To state the result formally, recall that the eigenvalues of an $n$-vertex graph $G$ are the eigenvalues $\lambda_1 \geq \dots \geq \lambda_n$ of its adjacency matrix $A$. When $G$ is $d$-regular, $\lambda_1 = d$. One of the key facts about $d$-regular expander graphs is that if $\lambda(G) = \max_{i \neq 1} |\lambda_i|$ is bounded away from $d$, then the graph $G$ exhibits strong expansion properties. We show that this behaviour carries over to the derived hypergraph $H$.

\begin{theorem} \label{thm:main}
Suppose that $S \subseteq \ZZ_2^t$ contains no non-trivial solutions to the equation $s_1 + s_2 = s'_1 + s'_2$ and $\lambda(Cay(\ZZ_2^t, S)) \leq (1 - \epsilon)|S|$. Then the hypergraph $H(\ZZ_2^t, S)$ is an $\frac{\epsilon^2}{128}$-edge-expander and an $\frac{\epsilon^2}{64} |S|$-triple-expander.
\end{theorem}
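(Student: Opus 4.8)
The plan is to derive both bounds from the spectral gap of $G := Cay(\ZZ_2^t, S)$ by exploiting the observation, already isolated in the excerpt, that $C_x = N(x)$ is \emph{exactly} the neighbourhood of $x$ in $G$. Thus the cliques of $H$ are the neighbourhoods of $G$; the Sidon condition makes each skeleton edge lie in exactly two of them (its two common neighbours in $G$) and each triple lie in exactly one. I would first reduce edge expansion to triple expansion. Given $F$ with $|F|\le|E|/2$, every triple $t\in N_T(F)$ contains some $f\in F$ and, since $t\notin\Delta(F)$, some $e\notin F$; as $e$ and $f$ both lie in the triangle $t$ they span it, so $e\in N_E(F)$. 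Because each edge lies in at most $2|S|-4$ triples, the map $t\mapsto e$ is at most $(2|S|-4)$-to-one, giving $|N_E(F)|\ge |N_T(F)|/(2|S|-4)$. Hence an $\tfrac{\epsilon^2}{64}|S|$-triple bound forces $|N_E(F)|\ge\tfrac{\epsilon^2}{128}|F|$, and it suffices to prove the triple bound.

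For triple expansion I would decompose over cliques. Let $r_v(x)$ denote the number of edges of $F$ joining $v$ to another vertex of $C_x$. A triple lies in $N_T(F)$ precisely when, inside its unique clique, it has both an edge in $F$ and an edge outside $F$, i.e.\ when it is a bichromatic triangle. Counting bichromatic triangles by the standard (Goodman-type) identity gives
\[ |N_T(F)| = \tfrac12\sum_x\sum_{v\in C_x} r_v(x)\bigl(|S|-1-r_v(x)\bigr). \]
Since every edge lies in two cliques, $\sum_x\sum_{v\in C_x} r_v(x) = 4|F|$, so writing $\Sigma_2 := \sum_x\sum_{v\in C_x} r_v(x)^2$ the identity becomes $|N_T(F)| = 2(|S|-1)|F| - \tfrac12\Sigma_2$. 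Consequently the target $|N_T(F)|\ge \tfrac{\epsilon^2}{64}|S||F|$ is \emph{equivalent} to the second-moment estimate $\Sigma_2 \le 4(|S|-1)|F| - \tfrac{\epsilon^2}{32}|S||F|$, and the whole theorem rests on this one inequality.

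The estimate on $\Sigma_2$ is the step I expect to be the main obstacle. The point is that $\Sigma_2$ is large exactly when $F$ saturates many cliques, pushing the degrees $r_v(x)$ to the extreme value $|S|-1$; the trivial bound $\Sigma_2\le (|S|-1)\cdot 4|F|$ must be beaten by a $\Theta(\epsilon^2)$-fraction, so a purely local (clique-by-clique) argument cannot succeed. Here the abelian, two-cliques-per-edge structure is essential: an edge of $F$ that saturates one clique still contributes to a degree in its second clique, so the only configurations making $\Sigma_2$ extremal are those in which $F$ saturates \emph{both} cliques of almost every edge, and this forces the support to be nearly closed under the skeleton adjacency. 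The expansion hypothesis forbids such closure for any set of density at most $1/2$, which is where $\lambda(G)\le(1-\epsilon)|S|$ enters.

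To make this quantitative I would expand
\[ \Sigma_2 = 4|F| + \sum_{v}\ \sum_{w_1\neq w_2\in N_F(v)} \bigl|N(v)\cap N(w_1)\cap N(w_2)\bigr|, \]
where $N_F(v)$ is the set of $F$-neighbours of $v$, and estimate the triple-common-neighbour counts by Fourier analysis on $\ZZ_2^t$. There $\widehat{1_S}$ is precisely the eigenvalue function of $G$, so the hypothesis bounds every non-principal Fourier coefficient by $(1-\epsilon)|S|$; the diagonal term reproduces the main contribution while the off-diagonal terms are controlled by this gap. Pushing the bound $(1-\epsilon)|S|$ through the second moment — equivalently, through the relation $\mu_a = \tfrac12(\lambda_a^2 - |S|)$ between the skeleton eigenvalues and those of $G$, together with a Cauchy--Schwarz step — is what accounts for the quadratic dependence $\epsilon^2$. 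Verifying that these error terms obey the stated bound with the explicit constants is the technical heart of the proof.
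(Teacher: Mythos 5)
Your reduction of edge-expansion to triple-expansion is correct and is essentially the paper's own argument (each skeleton edge lies in at most $2|S|-4$ triples, so divide), and your reformulation of triple-expansion is also sound: the identities $|N_T(F)| = \tfrac12\sum_x\sum_{v\in C_x} r_v(x)(|S|-1-r_v(x))$, $\sum_x\sum_{v\in C_x} r_v(x) = 4|F|$, and the resulting equivalence with the bound $\Sigma_2 \le 4(|S|-1)|F| - \tfrac{\epsilon^2}{32}|S||F|$ all follow correctly from the Sidon condition (each skeleton edge in exactly two cliques, each triple in exactly one). Up to this point you run parallel to the paper, which uses the same per-clique degree identity.

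The genuine gap is that the one step where the hypothesis $\lambda(\mathrm{Cay}(\ZZ_2^t,S)) \le (1-\epsilon)|S|$ must enter --- the second-moment bound on $\Sigma_2$ --- is never proved, and the route you sketch for it cannot work as stated. The sum $\sum_v\sum_{w_1\neq w_2\in N_F(v)}|N(v)\cap N(w_1)\cap N(w_2)|$ is weighted by an arbitrary set $F$ of \emph{pairs}; Fourier/eigenvalue bounds on $\ZZ_2^t$ control bilinear forms in functions of a single vertex variable, so the bound $|\widehat{1_S}|\le(1-\epsilon)|S|$ on non-principal coefficients cannot simply be ``pushed through'' this sum. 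Worse, since $|N(v)\cap N(w_1)\cap N(w_2)|\in\{0,1\}$ for distinct vertices, your quantity $\Sigma_2-4|F|$ equals exactly twice the number of edges induced on $F$ by the auxiliary graph whose vertices are skeleton edges and whose adjacencies are triples; bounding it is therefore \emph{equivalent} to the triple-expansion statement itself, so the plan is circular without a genuinely new input. The paper supplies that input combinatorially rather than Fourier-analytically: with $\delta=\epsilon/8$, call $C_x$ saturated if $|F_x|\ge(1-\delta)\binom{|S|}{2}$, and let $X$ be the set of saturated centres. Applying the expander mixing lemma to the \emph{vertex} sets $X$ and $X^c$ in the skeleton $\mathrm{Cay}(\ZZ_2^t,S')$ --- whose gap comes from the relation $\mu_i=\tfrac12(\lambda_i^2-|S|)$ you cite --- and using that each crossing edge $(x+s_1,x+s_2)$, $x\in X$, $x+s_1+s_2\in X^c$, lies in both of those cliques, one gets $\sum_{x\in X^c}|F_x|\ge\tfrac{\epsilon}{4}|F|$; then de Caen's inequality applied \emph{locally} inside each unsaturated clique gives $|N_T(F_x)|\ge\tfrac{\delta}{2}|S||F_x|$, and summing over $x \in X^c$ finishes. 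Your heuristic paragraph (saturation of both cliques of almost every edge is the only obstruction, and expansion of sets of density at most $1/2$ forbids it) correctly anticipates this mechanism, but the heuristic is precisely where the work lies; as written, the proposal omits the proof of the inequality on which the whole theorem rests.
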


By the Alon--Roichman theorem~\cite{AR94}, a random set $S \subseteq \ZZ_2^t$ of order $Ct$, for $C$ a sufficiently large constant, will a.a.s.~have the required properties with $\epsilon = 1/2$. Therefore, writing $n = |\ZZ_2^t|$, the theorem implies that there exists a hypergraph with $n$ vertices and $O(n \log^3 n)$ triples which is a $2^{-9}$-edge-expander and a $(2^{-8} \log n)$-triple-expander. 

Getting back to random walks, we follow Kaufman and Mass~\cite{KM16, KM162} in defining the {\it random walk} on a $3$-uniform hypergraph $H$ to be a sequence of edges $e_0, e_1, \dots \in E$ such that

\begin{enumerate}

\item
$e_0$ is chosen from some initial probability distribution $\mathbf{p}_0$ on $E$,

\item
for every $i \geq 1$, $e_i$ is chosen uniformly at random from the neighbours of $e_{i-1}$, that is, the set of $f \in E$ such that $e_{i-1} \cup f$ is an edge of $H$.

\end{enumerate}

We say that the random walk is {\it $\alpha$-rapidly mixing} if, for any initial probability distribution $\mathbf{p}_0$ and any $i \in \mathbb{N}$,
\[\|\mathbf{p}_i - \mathbf{u}\|_2 \leq \alpha^i,\]
where $\mathbf{p}_i$ is the probability distribution on $E$ after $i$ steps of the walk, $\mathbf{u}$ is the uniform distribution on $E$ and $\|\mathbf{x}\|_2 = (\sum_i x_i^2)^{1/2}$ for $\mathbf{x} = (x_1, x_2, \dots, x_n) \in \mathbb{R}^n$. As a corollary of Theorem~\ref{thm:main}, we can show that under appropriate conditions on Cay$(\ZZ_2^t, S)$ the hypergraph $H(\ZZ_2^t, S)$ is $\alpha$-rapidly mixing for some $\alpha < 1$. 
 
\begin{corollary} \label{cor:mixing}
Suppose that $S \subseteq \ZZ_2^t$ contains no non-trivial solutions to the equation $s_1 + s_2 = s'_1 + s'_2$ and $\lambda(Cay(\ZZ_2^t, S)) \leq (1 - \epsilon)|S|$.
Then the hypergraph $H(\ZZ_2^t, S)$ is $\alpha$-rapidly mixing with $\alpha = 1 - \Omega(\epsilon^4)$.
\end{corollary}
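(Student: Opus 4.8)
The plan is to realise the walk as the simple random walk on an auxiliary regular graph and then estimate its spectral gap using the triple-expansion guaranteed by Theorem~\ref{thm:main}. Let $G_E$ be the graph on vertex set $E$ in which two edges $e,f\in E$ are adjacent precisely when $e\cup f\in T$; this is exactly the adjacency used to define the walk. Since membership in a triple is symmetric and every $e\in E$ lies in exactly $2|S|-4$ triples, each contributing its two other edges as distinct neighbours, $G_E$ is an undirected $d$-regular graph with $d=4(|S|-2)$. The walk is then the simple random walk on $G_E$, with transition matrix $P=d^{-1}A_{G_E}$, which is symmetric and doubly stochastic, fixes $\mathbf{u}$, and preserves the all-ones direction. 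Writing $\alpha:=\max_{i\neq 1}|\lambda_i(P)|$ and noting that $\mathbf{p}_0-\mathbf{u}$ is orthogonal to the all-ones vector with $\|\mathbf{p}_0-\mathbf{u}\|_2\le\|\mathbf{p}_0\|_2\le\|\mathbf{p}_0\|_1=1$, one gets $\|\mathbf{p}_i-\mathbf{u}\|_2=\|P^i(\mathbf{p}_0-\mathbf{u})\|_2\le\alpha^i$. Thus it suffices to prove $\alpha\le 1-\Omega(\epsilon^4)$.

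The key bridge to Theorem~\ref{thm:main} is a combinatorial identity for the edge boundary of $G_E$. For $F\subseteq E$, each triple carrying a $G_E$-edge across the cut $(F,E\setminus F)$ has either one or two of its three edges in $F$, and in both cases contributes exactly two crossing edges; moreover distinct triples span edge-disjoint triangles in $G_E$, since the union $e\cup f$ of any $G_E$-edge determines its triple uniquely (here the no-nontrivial-solutions hypothesis guarantees that three vertices lie in at most one common clique). Hence the number of edges of $G_E$ leaving $F$ equals $2|N_T(F)|$. By the triple-expander conclusion of Theorem~\ref{thm:main}, for every $F$ with $|F|\le|E|/2$ this is at least $\tfrac{\epsilon^2}{32}|S|\,|F|$, so the conductance of $G_E$ satisfies $\phi(G_E)\ge \tfrac{\epsilon^2|S|/32}{4(|S|-2)}\ge\tfrac{\epsilon^2}{128}$. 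The point is that the degree $4(|S|-2)$ cancels the factor $|S|$ coming from the triple count, leaving a conductance bound independent of $|S|$. The discrete Cheeger inequality then gives $1-\lambda_2(P)\ge\tfrac12\phi(G_E)^2=\Omega(\epsilon^4)$, and this squaring is exactly where the exponent $4$ (rather than the $2$ appearing in the expansion constant) is produced.

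It remains to control the most negative eigenvalue, since $\alpha$ is a two-sided quantity; this is a necessary but easy addition. I would decompose $A_{G_E}=\sum_{t\in T}A_t$, where $A_t$ is the adjacency matrix of the triangle formed in $G_E$ by the three edges of the triple $t$; by the previous paragraph these triangles are edge-disjoint and cover each $G_E$-edge once. Each $A_t$ has eigenvalues $2,-1,-1$ on its support, so $x^\top A_t x\ge -\sum_{e\in t}x_e^2$, and since every $e\in E$ lies in $2(|S|-2)$ triples, summing yields $x^\top A_{G_E}x\ge -2(|S|-2)\|x\|_2^2$, that is $\lambda_{\min}(P)\ge-\tfrac12$. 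Therefore $\alpha=\max\bigl(\lambda_2(P),-\lambda_{\min}(P)\bigr)\le\max\bigl(1-\Omega(\epsilon^4),\tfrac12\bigr)=1-\Omega(\epsilon^4)$, as required.

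The main obstacle is the boundary identity of the second paragraph, which is the conceptual content of the reduction: once it is in place, the $|S|$-scaling of the triple-expansion constant is precisely what makes the conductance, and hence the mixing rate, independent of $|S|$. The negative-eigenvalue bound via the edge-disjoint triangle decomposition and the invocation of Cheeger's inequality are then routine, so the difficulty is entirely in correctly matching the graph-theoretic edge boundary of the walk to the hypergraph quantity $N_T$.
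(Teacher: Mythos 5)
Your proof is correct, and it shares the paper's overall frame: the same auxiliary graph on the skeleton edges, the same translation of triple-expansion into an edge-boundary bound, and the same Cheeger-inequality step for the second eigenvalue (your discrete Cheeger inequality is exactly the paper's Lemma~\ref{lem:Cheeger}, and your opening spectral argument re-derives Lemma~\ref{lem:mix}). Where you genuinely diverge is in the one place the paper works hardest: the most negative eigenvalue. The paper invokes the Desai--Roy theorem (Lemma~\ref{lem:DR}), which requires lower-bounding $\Psi=\min_U\bigl(b(U)+e(U,U^c)\bigr)/|U|$; for large $U$ this forces a supersaturation argument producing $\Omega(d^3 n)$ triangles inside $G[U]$, and it yields only $\lambda_N(G)\geq(-1+c\epsilon^4)D$. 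You instead observe that the triangles $\{\Delta_t\}_{t\in T}$ are edge-disjoint and partition the edge set of the auxiliary graph, so $A_{G_E}=\sum_{t\in T}A_t$ with $x^\top A_t x\geq-\sum_{e\subset t}x_e^2$; since each skeleton edge lies in $2(|S|-2)$ triples while the degree is $4(|S|-2)$, this gives $\lambda_{\min}(P)\geq-\tfrac12$ --- a bound that is simpler (no Desai--Roy, no supersaturation), independent of $\epsilon$, and quantitatively stronger than the paper's. Two further points in your favour: your exact identity $e_{G_E}(F,E\setminus F)=2|N_T(F)|$ makes precise what the paper only asserts ``easily implies,'' and your degree count $D=4(|S|-2)$ is the correct one --- the paper's stated $D=2d-4$ is the number of triples containing a given skeleton edge rather than the number of neighbouring edges (each triple contributes two neighbours); this slip is immaterial asymptotically, but your accounting is the right one.
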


The hypergraph $H(\ZZ_2^t, S)$ also satisfies several other properties, one notable example being the following pseudorandomness condition.

\begin{theorem} \label{thm:pseudo}
Suppose that $S_t \subseteq \ZZ_2^t$ is a sequence of sets with $t \rightarrow \infty$, where $S_t$ contains no non-trivial solutions to the equation $s_1 + s_2 = s'_1 + s'_2$ and $\lambda(Cay(\ZZ_2^{t}, S_t)) = o(|S_t|)$. Then, for any sets $A, B, C \subseteq \ZZ_2^{t}$ with $|A| = \alpha |\ZZ_2^{t}|$, $|B| = \beta |\ZZ_2^{t}|$ and $|C| = \gamma |\ZZ_2^{t}|$, where $\alpha, \beta$ and $\gamma$ are fixed positive constants, the number of triples $T(A, B, C)$ in $H(\ZZ_2^{t}, S_t)$ with one vertex in each of $A, B$ and $C$ is $(1 + o(1)) \alpha \beta \gamma |S_t|^3 |\ZZ_2^{t}|$.
\end{theorem}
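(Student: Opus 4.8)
The plan is to rewrite $T(A,B,C)$ as a sum over the cliques $C_x$ and then estimate that sum spectrally via the Fourier transform on $\ZZ_2^t$. Throughout I fix $t$ large and abbreviate $S = S_t$, $\lambda = \lambda(Cay(\ZZ_2^t,S_t)) = o(|S|)$ and $n = 2^t$, and I read $T(A,B,C)$ as the number of ordered triples $(a,b,c)\in A\times B\times C$ with $\{a,b,c\}$ a triple of $H$ (for disjoint $A,B,C$ this is simply the number of triples meeting each set). The first step is the observation that, under the hypothesis that $S$ has no non-trivial solutions to $s_1+s_2=s_1'+s_2'$, every triple of $H$ lies in exactly one clique: if $\{a,b,c\}\subseteq C_x\cap C_{x'}$ then $a+b$ has the two representations $(x+a)+(x+b)=(x'+a)+(x'+b)$ as a sum of two elements of $S$, forcing $\{x+a,x+b\}=\{x'+a,x'+b\}$, and repeating this for the pair $\{a,c\}$ pins down $x=x'$. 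Hence $(x,s_1,s_2,s_3)\mapsto(x+s_1,x+s_2,x+s_3)$ is a bijection from the tuples with $s_1,s_2,s_3\in S$ distinct, $x+s_1\in A$, $x+s_2\in B$, $x+s_3\in C$ onto the ordered triples counted by $T(A,B,C)$, so that
\[ T(A,B,C)=\sum_{\substack{s_1,s_2,s_3\in S\\ \text{distinct}}}\ \sum_{x\in\ZZ_2^t}1_A(x+s_1)\,1_B(x+s_2)\,1_C(x+s_3). \]
I would also record that $\lambda=o(|S|)$ forces $S$ to span $\ZZ_2^t$ (otherwise a non-trivial character is constant on $S$ and $\lambda=|S|$), so that $|S|\ge t\to\infty$.

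Replacing ``distinct'' by ``all'' $s_i$ changes this sum by $O(|S|^2n)$, which is negligible since $|S|\to\infty$, so it suffices to estimate $\tilde N=\sum_x g_A(x)g_B(x)g_C(x)$, where $g_A(x)=\sum_{s\in S}1_A(x+s)$ and similarly for $B,C$. Writing $h_A=g_A-|S||A|/n$ for the mean-zero part, I expand the product. The constant term contributes $|S|^3|A||B||C|/n^2=\alpha\beta\gamma|S|^3n$, the desired main term, and the three terms linear in a single $h$ vanish because $\sum_x h_A(x)=|S||A|-|S||A|=0$.

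It remains to show that the three quadratic terms, such as $\gamma|S|\sum_x h_Ah_B$, and the cubic term $\sum_x h_Ah_Bh_C$ are $o(|S|^3n)$. Using the convolution identity $\widehat{g_A}(r)=\widehat{1_S}(r)\widehat{1_A}(r)$ with $\widehat{1_S}(0)=|S|$ and $|\widehat{1_S}(r)|\le\lambda$ for $r\neq0$, Parseval gives $\|h_A\|_2^2=\frac1n\sum_{r\neq0}\widehat{1_S}(r)^2\widehat{1_A}(r)^2\le\lambda^2|A|$, so $\|h_A\|_2=O(\lambda\sqrt n)$, and likewise $\sum_x h_Ah_B=\frac1n\sum_{r\neq0}\widehat{1_S}(r)^2\widehat{1_A}(r)\widehat{1_B}(r)=O(\lambda^2n)$ by Cauchy--Schwarz; hence each quadratic term is $O(|S|\lambda^2n)=o(|S|^3n)$. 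The cubic term is the main obstacle: bounding $\frac1{n^2}\sum_{r_1+r_2+r_3=0}|\widehat{1_A}(r_1)\widehat{1_B}(r_2)\widehat{1_C}(r_3)|$ after pulling out $|\widehat{1_S}(r_i)|\le\lambda$ loses a factor of $\sqrt n$ and is not good enough. I would instead peel off one factor in $L^\infty$: since $0\le g_A\le|S|$ we have $\|h_A\|_\infty\le|S|$, and therefore
\[ \left|\sum_x h_Ah_Bh_C\right|\le\|h_A\|_\infty\|h_B\|_2\|h_C\|_2\le|S|\cdot O(\lambda\sqrt n)\cdot O(\lambda\sqrt n)=O(|S|\lambda^2n), \]
which is again $o(|S|^3n)$. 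Combining all contributions gives $T(A,B,C)=\alpha\beta\gamma|S|^3n+O(|S|\lambda^2n)+O(|S|^2n)=(1+o(1))\alpha\beta\gamma|S|^3n$, as claimed. The only delicate point is this last estimate, where the trick of mixing an $L^\infty$ bound with two $L^2$ bounds avoids the wasteful pure-Fourier Cauchy--Schwarz.
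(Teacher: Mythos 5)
Your proof is correct, but it takes a genuinely different route from the paper's. The paper deduces Theorem~\ref{thm:pseudo} from the more precise Theorem~\ref{lem:comb}, which is proved by a two-step expander-mixing argument: the spectrum of the skeleton Cay$(\ZZ_2^t,S')$ is computed from that of Cay$(\ZZ_2^t,S)$ (Lemma~\ref{lem:eig}, using the no-non-trivial-solutions hypothesis), the expander mixing lemma in Cay$(\ZZ_2^t,S')$ counts skeleton edges between $A$ and $B$, each such edge is then replaced by its two clique-centres (again by that hypothesis) to form a multiset $W(A,B)$, and a multiset version of the mixing lemma (Lemma~\ref{lem:eml}) in Cay$(\ZZ_2^t,S)$ counts edges from $W(A,B)$ to $C$. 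You instead invoke the hypothesis exactly once, to show that each triple lies in a \emph{unique} clique $C_x$ (the unordered analogue of the paper's ``each edge lies in exactly two cliques''), which turns $T(A,B,C)$ into the exact trilinear sum $\sum_x |C_x\cap A|\,|C_x\cap B|\,|C_x\cap C|$ up to the distinctness correction, and you then estimate that sum directly by Fourier analysis on $\ZZ_2^t$: mean-zero decomposition, Parseval for the quadratic error terms, and the $L^\infty\times L^2\times L^2$ bound for the cubic term. That last bound is the crux of your argument, and your diagnosis is accurate: the naive all-Fourier estimate of the trilinear term does lose a factor of $\sqrt n$, and pulling out $\|h_A\|_\infty\le |S|$ is what rescues it. Your route is more self-contained (it needs neither Lemma~\ref{lem:eig} nor the multiset mixing lemma) and in fact yields a slightly sharper error term, $O(|S|\lambda^2 n + |S|^2 n)$, compared with the dominant $\lambda |S|^2\sqrt{\alpha\beta\gamma}\,n$ term of Theorem~\ref{lem:comb}; what the paper's structure buys is reuse, since Lemma~\ref{lem:eig} and the clique-centre bookkeeping also drive the proofs of Theorem~\ref{thm:tripexp} and Corollary~\ref{cor:mixing}. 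Two points you handled correctly that a reader should verify: the passage from distinct to unrestricted $s_i$ costs $O(|S|^2 n)$ and therefore needs $|S|\to\infty$, which you rightly extract from $\lambda=o(|S|)$ (a non-spanning $S$ has $\lambda=|S|$); and your uniqueness-of-clique argument uses the distinctness of the generators to exclude $x\neq x'$, which is available because triples of $H$ are generated by distinct elements of $S$.
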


That is, provided $\lambda(Cay(\ZZ_2^{t}, S))$ is small, the density of edges between any three large vertex subsets $A$, $B$ and $C$ in $H(\ZZ_2^t, S)$ is asymptotic to the expected value, as it would be in a random hypergraph of the same density. 

As noted in~\cite{P17, PRT16}, any sequence of hypergraphs satisfying the conclusion of Theorem~\ref{thm:pseudo} will also satisfy Gromov's geometric overlap property~\cite{G10}. We say that a $3$-uniform hypergraph $H$ has the {\it $c$-geometric overlap property} if, for every embedding $\varphi: V(H) \rightarrow \mathbb{R}^2$ of the vertices of $H$ in the plane, there is a point $x \in \mathbb{R}^2$ which is contained in the convex hull of at least a $c$-proportion of the triples of $H$. A result of Boros and F\"uredi~\cite{BF84} (which was generalised to higher dimensions by B\'ar\'any~\cite{B82}) says that the family of complete $3$-uniform hypergraphs has the $c$-geometric overlap property with $c = \frac{2}{9} - o(1)$ and this constant is known to be sharp~\cite{BMN10}. Much more recently, answering a question of Gromov~\cite{G10}, Fox, Gromov, Lafforgue, Naor and Pach~\cite{FGLNP12} found a number of families of bounded-degree hypergraphs with the $c$-geometric overlap property for some fixed positive $c$. Indeed, one of their constructions~\cite[Section 4.1]{FGLNP12} is a close relative of ours and satisfies a version of Theorem~\ref{thm:pseudo}, but lacks the intersection property between the sets $C_x$ which is necessary to guarantee the properties encapsulated in Theorem~\ref{thm:main}. 

To see how the geometric overlap property follows from the conclusion of Theorem~\ref{thm:pseudo}, we recall Pach's selection theorem~\cite{P98}, which gives a stronger guarantee than the Boros--F\"uredi result, saying that for any set of $n$ points in the plane there exist three sets $A$, $B$ and $C$, each of order $\Omega(n)$, and a point $x \in \mathbb{R}^2$ such that every triangle $abc$ with $a \in A$, $b \in B$ and $c \in C$ has $x$ in its convex hull. Suppose now that $\varphi : V(H) \rightarrow \mathbb{R}^2$ is an embedding of the vertices of $H(\ZZ_2^t, S_t)$ in the plane and let $A$, $B$ and $C$ be the sets and $x$ the point guaranteed by applying Pach's theorem to this point set. By Theorem~\ref{thm:pseudo}, the number of triples of $H$ with one vertex in each of $A$, $B$ and $C$ is itself a positive proportion of the total number of triples in $H$. Since each one of these triples has $x$ in its convex hull, we have deduced the following corollary.

\begin{corollary} \label{cor:geom}
Suppose that $S_t \subseteq \ZZ_2^t$ is a sequence of sets with $t \rightarrow \infty$, where $S_t$ contains no non-trivial solutions to the equation $s_1 + s_2 = s'_1 + s'_2$ and $\lambda(Cay(\ZZ_2^{t}, S_t)) = o(|S_t|)$. Then the family of $3$-uniform hypergraphs $H(\ZZ_2^t, S_t)$ has the $(c - o(1))$-geometric overlap property for some $c > 0$.
\end{corollary}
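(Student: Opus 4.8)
The plan is to run exactly the argument sketched in the paragraph preceding the statement, combining Theorem~\ref{thm:pseudo} with Pach's selection theorem, and simply to make the counting quantitative. First I would fix an arbitrary map $\varphi : V(H) \rightarrow \mathbb{R}^2$ and apply Pach's theorem to the resulting set of $n := |\ZZ_2^t|$ points (allowing multiplicity, which is harmless and can if necessary be handled by a standard perturbation argument). This produces an absolute constant $c_0 > 0$, independent of $t$, three sets $A, B, C \subseteq \ZZ_2^t$ with $|A|, |B|, |C| \geq c_0 n$, which I may take to be pairwise disjoint, and a point $x \in \mathbb{R}^2$ lying in the convex hull of $\varphi(a), \varphi(b), \varphi(c)$ for every $a \in A$, $b \in B$ and $c \in C$. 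Passing to subsets if needed, I may assume $|A| = |B| = |C| = \lceil c_0 n \rceil$, so that the densities $\alpha, \beta, \gamma$ are all equal to the fixed positive constant $c_0$ and Theorem~\ref{thm:pseudo} applies directly.

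The second step is bookkeeping. Theorem~\ref{thm:pseudo} gives $T(A,B,C) = (1 + o(1)) c_0^3 |S_t|^3 n$; since $A, B, C$ are disjoint, each triple with one vertex in each set admits a unique such assignment, so this is exactly the number of triples of $H$ with one vertex in each of $A$, $B$ and $C$. To convert this into a proportion I need the total number of triples. By the remark following the definition of $H$, every vertex lies in exactly $3\binom{|S_t|}{3}$ triples, so the total number of triples is $n\binom{|S_t|}{3} = (1+o(1))|S_t|^3 n / 6$; equivalently, this is what Theorem~\ref{thm:pseudo} yields upon taking $A = B = C = \ZZ_2^t$, where each triple is then counted once for each of its $3! = 6$ vertex labellings. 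Either way, the fraction of triples of $H$ with one vertex in each of $A$, $B$ and $C$ is $6 c_0^3 (1 - o(1))$.

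Finally, every triple counted by $T(A,B,C)$ has its three images lying in $\varphi(A), \varphi(B)$ and $\varphi(C)$ respectively, so by the defining property of $x$ its convex hull contains $x$. Hence $x$ lies in the convex hull of at least a $6 c_0^3(1 - o(1))$-fraction of all triples of $H(\ZZ_2^t, S_t)$, and since $\varphi$ was arbitrary this establishes the $(c - o(1))$-geometric overlap property with $c = 6 c_0^3 > 0$.

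I do not expect any serious obstacle, since the genuine content is already packaged inside Theorem~\ref{thm:pseudo}. The only points demanding a little care are the multiplicity issue when applying Pach's theorem to a possibly non-injective embedding, and the factor of $6$ relating the labelled count appearing in Theorem~\ref{thm:pseudo} to the number of unordered triples; both are routine once stated explicitly.
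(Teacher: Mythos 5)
Your proposal is correct and follows essentially the same route as the paper: apply Pach's selection theorem to the embedded point set to obtain $A$, $B$, $C$ and the point $x$, then invoke Theorem~\ref{thm:pseudo} to see that a positive proportion of all triples has one vertex in each set and hence contains $x$ in its convex hull. The extra bookkeeping you supply (disjointness of the Pach sets, the factor $3!=6$ relating the labelled count of Theorem~\ref{thm:pseudo} to the number of unordered triples, and the resulting constant $c = 6c_0^3$) is consistent and merely makes quantitative what the paper leaves implicit.
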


As in~\cite{FGLNP12}, we can also recover the sharp constant $c = \frac{2}{9}$ by following Bukh's proof~\cite{B06} of the Boros--F\"uredi result. However, we omit the proof of this result, referring the reader instead to~\cite{FGLNP12}. 

It remains an open problem to determine whether an analogue of Corollary~\ref{cor:geom} holds for the stronger topological overlap property. We say that a $3$-uniform hypergraph $H$ has the {\it $c$-topological overlap property} if, for every continuous map $\varphi: X \rightarrow \mathbb{R}^2$ from the simplicial complex $X = (V, E, T)$ of the hypergraph $H$ to the plane, there is a point $x \in \mathbb{R}^2$ which is contained in the image of at least a $c$-proportion of the triples of $H$. Gromov~\cite{G10} generalised the Boros--F\"uredi result (and B\'ar\'any's result), showing that the family of complete $3$-uniform hypergraphs has the $c$-topological overlap property with $c = \frac{2}{9} - o(1)$. The difficult problem of constructing a family of $3$-uniform hypergraphs of bounded degree with the $c$-topological overlap property for some fixed positive $c$ was only solved recently by Kaufman, Kazhdan and Lubotzky~\cite{KKL16} and then extended to higher uniformities by Evra and Kaufman~\cite{EK17}. Their work relies on the properties of Ramanujan complexes, but we conjecture that our construction gives another simpler example, albeit one with polylogarithmic rather than constant degree.

\section{Proofs}

We begin with a simple lemma relating the eigenvalues of Cay$(\ZZ_2^t, S')$ to those of Cay$(\ZZ_2^t, S)$. In particular, this means that the skeleton Cay$(\ZZ_2^t, S')$ of our hypergraph $H(\ZZ_2^t, S)$ is an expander whenever Cay$(\ZZ_2^t, S)$ is. Here and throughout this section, we will use the shorthands $n = |\ZZ_2^t|$ and $d = |S|$.

\begin{lemma} \label{lem:eig}
Suppose that $S \subseteq \ZZ_2^t$ contains no non-trivial solutions to the equation $s_1 + s_2 = s'_1 + s'_2$ and that the eigenvalues of the Cayley graph Cay$(\ZZ_2^t, S)$ are $\lambda_i$ for $i = 1, \dots, n$. Then the eigenvalues of the Cayley graph Cay$(\ZZ_2^t, S')$, where $S' = \{s_1 + s_2 : s_1, s_2 \in S, s_1 \neq s_2\}$, are $\frac{1}{2}(\lambda_i^2 - d)$ for $i = 1, \dots, n$.
\end{lemma}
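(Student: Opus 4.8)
The plan is to exploit the fact that, because $\ZZ_2^t$ is abelian, every Cayley graph on it is diagonalised by a single basis of characters that does not depend on the connection set. For each $a \in \ZZ_2^t$, let $\chi_a \colon \ZZ_2^t \to \{\pm 1\}$ be the character $\chi_a(x) = (-1)^{\langle a, x\rangle}$, where $\langle \cdot, \cdot\rangle$ denotes the standard bilinear form on $\ZZ_2^t$. The vectors $(\chi_a(x))_{x \in \ZZ_2^t}$ form a common orthogonal eigenbasis for the adjacency matrix of any Cayley graph on $\ZZ_2^t$: the eigenvalue attached to $\chi_a$ in $\mathrm{Cay}(\ZZ_2^t, S)$ is $\lambda_a := \sum_{s \in S} \chi_a(s) = \sum_{s \in S} (-1)^{\langle a, s\rangle}$, and the eigenvalue attached to $\chi_a$ in $\mathrm{Cay}(\ZZ_2^t, S')$ is $\mu_a := \sum_{s' \in S'} (-1)^{\langle a, s'\rangle}$. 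Since the two graphs share this eigenbasis, it suffices to verify the pointwise identity $\mu_a = \tfrac{1}{2}(\lambda_a^2 - d)$ for every $a$; the matching of the index $i$ between the two spectra is then automatic and needs no separate argument.

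To establish this identity, I would first use the hypothesis to rewrite $\mu_a$ as a sum over unordered pairs. The assumption that $S$ has no non-trivial solutions to $s_1 + s_2 = s_1' + s_2'$ is precisely the statement that the map sending an unordered pair $\{s_1, s_2\}$ of distinct elements of $S$ to $s_1 + s_2$ is injective onto $S'$; moreover $s_1 + s_2 \neq 0$ whenever $s_1 \neq s_2$, so $S'$ is a legitimate connection set. Hence, using $\langle a, s_1 + s_2\rangle = \langle a, s_1\rangle + \langle a, s_2\rangle$ over $\ZZ_2$,
\[\mu_a = \sum_{\{s_1, s_2\}} (-1)^{\langle a, s_1\rangle} (-1)^{\langle a, s_2\rangle},\]
the sum ranging over unordered pairs of distinct elements of $S$. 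On the other hand, expanding the square and separating the diagonal from the off-diagonal ordered pairs gives
\[\lambda_a^2 = \Bigl(\sum_{s \in S} (-1)^{\langle a, s\rangle}\Bigr)^2 = \sum_{s \in S} (-1)^{2\langle a, s\rangle} + 2\sum_{\{s_1, s_2\}} (-1)^{\langle a, s_1\rangle}(-1)^{\langle a, s_2\rangle} = d + 2\mu_a,\]
where the diagonal terms contribute $d$ since $(-1)^{2k} = 1$. Rearranging yields $\mu_a = \tfrac{1}{2}(\lambda_a^2 - d)$, and letting $a$ range over all characters produces the claimed multiset of eigenvalues.

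The one place requiring care is the injectivity step. Without the no-non-trivial-solutions hypothesis, a single element of $S'$ could be represented by several distinct pairs $\{s_1, s_2\}$, so the set sum $\sum_{s' \in S'}$ would undercount relative to the pair sum and the clean relation $\lambda_a^2 = d + 2\mu_a$ would fail. Everything else is a routine character computation, so I expect verifying that the hypothesis gives exactly the required bijection between pairs and $S'$ to be the only substantive point.
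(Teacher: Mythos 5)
Your proof is correct, and it reaches the conclusion by a genuinely different route from the paper's. The paper works entirely on the matrix side: it shows that the adjacency matrix of $\mathrm{Cay}(\ZZ_2^t, S')$ equals $\frac{1}{2}(A^2 - dI)$, where $A$ is the adjacency matrix of $\mathrm{Cay}(\ZZ_2^t, S)$ --- the entry $A^2_{xy}$ counts representations $x + y = s_1 + s_2$, and the no-non-trivial-solutions hypothesis forces this count to be $2$ or $0$ for $x \neq y$ and $d$ on the diagonal --- after which the conclusion is immediate because a polynomial in $A$ has eigenvalues given by that polynomial applied to the $\lambda_i$, with the same eigenvectors. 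You instead diagonalise both graphs explicitly in the common character basis $\chi_a(x) = (-1)^{\langle a, x\rangle}$ and verify the scalar identity $\mu_a = \frac{1}{2}(\lambda_a^2 - d)$ by expanding the square of a character sum. The combinatorial core is identical in the two arguments --- each element of $S'$ has exactly one representation as an unordered pair of distinct elements of $S$, equivalently exactly two ordered representations --- but you deploy it on the Fourier side rather than in the entries of $A^2$. What your version buys is an explicit formula $\lambda_a = \sum_{s \in S} (-1)^{\langle a, s\rangle}$ for every eigenvalue and a transparent matching of the two spectra through the shared eigenbasis, with no appeal to the spectral mapping theorem; what the paper's version buys is brevity and independence from any knowledge of the eigenvectors, since it only needs the fact that the second adjacency matrix is a polynomial in the first. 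Both arguments handle the multiset correspondence of eigenvalues correctly, so there is no gap in yours; your attention to the injectivity of the pair-sum map and to $S'$ being a legitimate connection set ($0 \notin S'$) covers the points where care is needed.
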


\begin{proof}
Let $A$ be the adjacency matrix of Cay$(\ZZ_2^t, S)$. It will suffice to show that the adjacency matrix of Cay$(\ZZ_2^t, S')$ is $\frac{1}{2}(A^2 - d I)$. To see this, note that $A^2_{xy}$ is the number of solutions to $x + y = s_1 + s_2$ with $s_1, s_2 \in S$. When $x \neq y$, the assumption that there are no non-trivial solutions to $s_1 + s_2 = s'_1 + s'_2$ tells us that $A^2_{xy} = 2$ or $0$ depending on whether or not $x + y$ are joined in Cay$(\ZZ_2^t, S')$ or not. When $x = y$, $A_{xx}^2 = d$, corresponding to the $d$ solutions $x + x = s + s = 0$ for all $s \in S$. The result follows. 
\end{proof}

Given two multisets $V$ and $W$ taken from the vertex set of a graph with edge set $E$, we write $e(V,W)$ to denote $\sum_{v \in V, w \in W} 1_E(vw)$. We will also use $v_x$ and $w_y$ to denote the multiplicity of $x$ in $V$ and $y$ in $W$, respectively. In what follows, we will need a slight variant of the expander mixing lemma which applies to multisets. Since the proof is identical to the usual expander mixing lemma, we omit it.

\begin{lemma}[Expander mixing lemma] \label{lem:eml}
Suppose that $G$ is an $(n, d, \lambda)$-graph, that is, $G$ has $n$ vertices of degree $d$ and all eigenvalues, save the largest, have absolute value at most $\lambda$. Then, for any two multisets $V, W \subseteq V(G)$,
\[\left|e(V, W) - \frac{d}{n} |V||W|\right| \leq \lambda \sqrt{\left(\sum_{x \in V} v_x^2 - \frac{|V|^2}{n}\right) \left(\sum_{y \in W} w_y^2 - \frac{|W|^2}{n}\right)}.\]
\end{lemma}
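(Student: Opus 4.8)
The plan is to recognise $e(V,W)$ as a bilinear form in the adjacency matrix $A$ of $G$ and then read off the bound from the spectral decomposition of $A$, exactly as in the set case, with the multiplicity vectors playing the role of indicator vectors. Write $\mathbf{v} = (v_x)_{x}$ and $\mathbf{w} = (w_y)_{y}$ for the vectors recording the multiplicities, so that $|V| = \sum_x v_x$, $|W| = \sum_y w_y$ and, directly from the definition of $e(V,W)$,
\[e(V,W) = \sum_{x,y} v_x w_y A_{xy} = \mathbf{v}^{\mathsf{T}} A \mathbf{w}.\]
First I would fix an orthonormal eigenbasis $\mathbf{u}_1, \dots, \mathbf{u}_n$ of $A$ with eigenvalues $\lambda_1 = d \geq \lambda_2 \geq \dots \geq \lambda_n$, using the fact that $d$-regularity lets us take $\mathbf{u}_1 = n^{-1/2}\mathbf{1}$.

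Expanding $\mathbf{v} = \sum_i \alpha_i \mathbf{u}_i$ and $\mathbf{w} = \sum_i \beta_i \mathbf{u}_i$, the leading coefficients are $\alpha_1 = \langle \mathbf{v}, \mathbf{u}_1\rangle = |V|/\sqrt{n}$ and $\beta_1 = |W|/\sqrt{n}$, so the contribution of $\mathbf{u}_1$ to $\mathbf{v}^{\mathsf{T}} A \mathbf{w} = \sum_i \lambda_i \alpha_i \beta_i$ is exactly the main term $d \alpha_1 \beta_1 = \frac{d}{n}|V||W|$. Subtracting it leaves $\sum_{i \geq 2} \lambda_i \alpha_i \beta_i$, which I would bound by $\lambda \sum_{i \geq 2}|\alpha_i||\beta_i|$ using $|\lambda_i| \leq \lambda$ for $i \neq 1$, and then by $\lambda \left(\sum_{i\geq 2}\alpha_i^2\right)^{1/2}\left(\sum_{i \geq 2}\beta_i^2\right)^{1/2}$ via Cauchy--Schwarz. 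Finally, Parseval gives $\sum_{i \geq 2}\alpha_i^2 = \|\mathbf{v}\|_2^2 - \alpha_1^2 = \sum_x v_x^2 - |V|^2/n$, and likewise for $\mathbf{w}$, which is precisely the expression under the square root.

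There is essentially no obstacle here -- which is why the lemma is stated without proof -- and the only point distinguishing it from the textbook version is the bookkeeping with multiplicities. Concretely, for a genuine set one has $v_x \in \{0,1\}$, so $\sum_x v_x^2 = |V|$ and the familiar bound $\lambda\sqrt{|V|(1 - |V|/n)|W|(1 - |W|/n)}$ is recovered; the multiset statement merely keeps $\sum_x v_x^2$ in place of $|V|$, and every step above is insensitive to whether the entries of $\mathbf{v}$ and $\mathbf{w}$ are $0/1$ or arbitrary non-negative integers. The one thing to verify with any care is that the leading coefficients $\alpha_1, \beta_1$ are computed correctly, since it is their product that yields the main term whose isolation is the entire content of the lemma.
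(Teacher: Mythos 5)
Your proof is correct and is exactly the standard spectral argument that the paper has in mind when it omits the proof as ``identical to the usual expander mixing lemma'': expand the multiplicity vectors in an orthonormal eigenbasis, peel off the contribution of $n^{-1/2}\mathbf{1}$, and apply Cauchy--Schwarz to the rest. The bookkeeping with $\sum_x v_x^2$ in place of $|V|$ is handled correctly, so there is nothing to add.
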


We are already in a position to show that $H(\ZZ_2^t, S)$ satisfies the pseudorandomness property encapsulated in Theorem~\ref{thm:pseudo}. This is a simple corollary of the following more precise result.

\begin{theorem} \label{lem:comb}
Suppose that $S \subseteq \ZZ_2^t$ contains no non-trivial solutions to the equation $s_1 + s_2 = s'_1 + s'_2$ and the eigenvalues of the Cayley graph Cay$(\ZZ_2^t, S)$ satisfy $|\lambda_i| \leq \lambda$ for all $i = 2, \dots, n$. Then, for any sets $A, B, C \subseteq \ZZ_2^t$ with $|A| = \alpha n$, $|B| = \beta n$ and $|C| = \gamma n$, the number of triples $e(A, B, C)$ in $H(\ZZ_2^t, S)$ with one vertex in each of $A, B$ and $C$ is
\[(d^3 - d^2) \alpha \beta \gamma n \pm 2 \mu d \sqrt{\alpha \beta} \gamma n \pm \lambda d^2 \sqrt{\alpha \beta \gamma} n \pm \lambda \sqrt{\mu} d (\alpha \beta)^{1/4} \sqrt{\gamma} n \pm 2 d^2 \alpha \beta n \pm 4 \mu \sqrt{\alpha \beta} n,\]
where $\mu = \frac{1}{2} (\lambda^2 + d)$.
\end{theorem}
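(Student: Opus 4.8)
The plan is to count $e(A,B,C)$ as the number of ordered triples $(a,b,c)\in A\times B\times C$ that form a triple of $H$, exploiting the fact that each triple of $H$ arises from a unique pair (base point $x$, three-element subset $\{s_1,s_2,s_3\}\subseteq S$), a bijection guaranteed by the hypothesis that $S$ has no non-trivial solutions to $s_1+s_2 = s_1'+s_2'$. Writing $f_C(x) := |C\cap C_x|$ for the number of vertices of $C$ in the clique $C_x$ and summing out the third coordinate $s_3$, I would first establish the identity
\[ e(A,B,C) = \sum_{\substack{s_1,s_2\in S\\ s_1\ne s_2}}\sum_{x\in\ZZ_2^t} 1_A(x+s_1)\,1_B(x+s_2)\,f_C(x) \;-\; D, \]
where $D$ collects the degenerate terms in which the third vertex coincides with the first or second. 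A crude bound via Lemma~\ref{lem:eml} gives $D = O(d^2\sqrt{\alpha\beta\gamma}\,n)$, which is absorbed into the error term $\lambda d^2\sqrt{\alpha\beta\gamma}\,n$ (note $\lambda\ge1$).

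The core of the argument is then to split $f_C(x) = \gamma d + \delta_C(x)$ into its mean $\tfrac{d}{n}|C| = \gamma d$ and a fluctuation $\delta_C$. The mean part contributes $\gamma d\cdot 2e(A,B)$, where $e(A,B)$ denotes the number of ordered pairs $(a,b)\in A\times B$ adjacent in the skeleton $\mathrm{Cay}(\ZZ_2^t,S')$, the factor $2$ reflecting that each such edge lies in exactly two cliques. Since Lemma~\ref{lem:eig} shows the skeleton is an $(n,\binom{d}{2},\mu)$-graph with $\mu = \tfrac12(\lambda^2+d)$, applying the expander mixing lemma to $e(A,B)$ produces exactly the main term $(d^3-d^2)\alpha\beta\gamma n$ together with the first error term $2\mu d\sqrt{\alpha\beta}\,\gamma n$.

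It remains to bound the fluctuation contribution $R := \sum_x m(x)\,\delta_C(x)$, where $m(x) := |A\cap C_x|\,|B\cap C_x| - |A\cap B\cap C_x|$ is the number of ordered pairs $(a,b)\in(A\cap C_x)\times(B\cap C_x)$ with $a\ne b$; this is where the eigenvalue $\lambda$ of $\mathrm{Cay}(\ZZ_2^t,S)$ re-enters. I would recognise $R$ as $e(V,C)-\tfrac{d}{n}|V||C|$ in the graph $\mathrm{Cay}(\ZZ_2^t,S)$, where $V$ is the multiset assigning multiplicity $m(x)$ to each $x$, and apply the multiset expander mixing lemma (Lemma~\ref{lem:eml}) once more. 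This reduces the task to bounding $\sum_x m(x)^2$. The key step is the crude inequality $m(x)\le d^2$, which gives $\sum_x m(x)^2\le d^2\sum_x m(x) = d^2\cdot 2e(A,B)$, after which a second application of the skeleton mixing lemma splits $2e(A,B)$ into its main term $d(d-1)\alpha\beta n$ and a fluctuation of size $O(\mu\sqrt{\alpha\beta}\,n)$; taking square roots and using $\sqrt{u+v}\le\sqrt u+\sqrt v$ yields precisely the remaining two error terms $\lambda d^2\sqrt{\alpha\beta\gamma}\,n$ and $\lambda\sqrt{\mu}\,d(\alpha\beta)^{1/4}\sqrt{\gamma}\,n$.

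The main obstacle is this last step: $R$ involves a triple product of the intersection functions $|A\cap C_x|$, $|B\cap C_x|$ and $f_C(x)$, which is not itself a bilinear form, and the naive estimate $\sum_x m(x)^2\le\sum_x |A\cap C_x|^2\,|B\cap C_x|^2$ is a fourth moment too large to control termwise. Recasting $R$ through the multiset mixing lemma and, crucially, bounding the second moment of the multiplicities $m(x)$ by their first moment via $m(x)\le d^2$ is what keeps both resulting error terms at the correct order; pinning down the constants so that they match the stated bound is then a routine computation.
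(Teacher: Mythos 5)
Your proposal is correct and is essentially the paper's own proof: the paper likewise gets the main term by applying the expander mixing lemma to the skeleton $\mathrm{Cay}(\ZZ_2^t,S')$ (an $(n,\binom{d}{2},\mu)$-graph by Lemma~\ref{lem:eig}) and controls the third vertex by applying the multiset mixing lemma (Lemma~\ref{lem:eml}) in $\mathrm{Cay}(\ZZ_2^t,S)$ to the multiset of clique centres of the $A$--$B$ pairs, bounding its second moment by (maximum multiplicity) $\times$ (first moment) exactly as you do --- your multiset $V$ with multiplicities $m(x)$ is the paper's $W(A,B)$ in ordered form, and your mean/fluctuation split of $f_C$ is just the two terms of that mixing lemma (if anything, you are more careful than the paper, which never subtracts the degenerate pairs $D$). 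The one quibble is quantitative: the crude bound $m(x)\le d^2$ together with the ordered first moment $\sum_x m(x)=2e(A,B)$ gives $\sqrt{2}\,\lambda\sqrt{\mu}\,d\,(\alpha\beta)^{1/4}\sqrt{\gamma}\,n$ rather than the stated last term, a factor $\sqrt{2}$ loss that the paper avoids by working with unordered edges, each lying in exactly two cliques and each clique containing at most $\binom{d}{2}$ of them, so that $\sum_x w_x^2 \le \binom{d}{2}\,|W| \le d^2|E(A,B)|$ with $|E(A,B)| \le \binom{d}{2}\alpha\beta n + \mu\sqrt{\alpha\beta}\,n$.
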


\begin{proof}
Since Cay$(G, S')$ is $\binom{d}{2}$-regular and, by Lemma~\ref{lem:eig}, has all eigenvalues, except the largest, at most $\mu = \frac{1}{2}(\lambda^2 + d)$ in absolute value, the number of pairs in the skeleton of $H(\ZZ_2^t, S)$ that have one vertex in $A$ and one vertex in $B$ is, by the expander mixing lemma,
\begin{equation} \label{eqn:comexp1} 
\binom{d}{2} \alpha \beta n \pm \mu \sqrt{\alpha \beta} n.
\end{equation}
Given this set of edges $E(A, B)$, let $W(A, B)$ be the multiset of corresponding centres, that is, $w$ appears in $W(A, B)$ once for each edge $e \in E(A, B)$ in the induced subgraph of Cay$(G, S')$ on $C_w$. We claim that $|W(A, B)| = 2 |E(A, B)|$. To see this, write $e = (u, v)$ and note that if $e \subset C_{w_1}, C_{w_2}$, then $u = w_1 + s_1$, $v = w_1 + s_2$, $u = w_2 + s'_1$ and $v = w_2 + s'_2$ for $s_1, s_2, s'_1, s'_2 \in S$. This implies that $s_1 + s_2 = s'_1 + s'_2$, but since there are no non-trivial solutions to this equation, we must have $s'_1 = s_2$ and $s'_2 = s_1$. Therefore, $e$ is contained in at most two sets of the form $C_w$. To see that it is exactly two, note that $e \subset C_{u + s_1}, C_{u + s_2}$. 

The number of triples containing a pair from $E(A, B)$ and a vertex from $C$ is now the number of edges between $W := W(A, B)$ and $C$ in the graph Cay$(\ZZ_2^t, S)$, though we may have to remove as much as $2|W|$ to account for the fact that an edge between $W$ and $C$ will only give a true triple if the vertex in $C$ differs from the other two vertices of the triple. Therefore, by Lemma~\ref{lem:eml},
\begin{equation} \label{eqn:comexp2}
e(A, B, C) = e(W, C) \pm 2|W| = \frac{d}{n} |W||C| \pm \lambda \sqrt{\sum_{x \in W} w_x^2 |C|} \pm 2|W|.
\end{equation}
Since $w_x \leq \binom{d}{2}$ for all $x$, we have 
$$\sum_{x \in W} w_x^2 \leq \binom{d}{2} \sum_x w_x = \binom{d}{2} |W| \leq d^2 |E(A,B)|.$$ 
Substituting \eqref{eqn:comexp1} into \eqref{eqn:comexp2} using this fact yields the required result.
\end{proof}

We now prove our main theorem, Theorem~\ref{thm:main}, beginning with the triple expansion property. We will make use of the following result of de Caen~\cite{dC98} putting an upper bound on the sum of the squares of the degrees of a graph.

\begin{lemma} \label{lem:deC}
For any graph with $n$ vertices, $m$ edges and vertex degrees $d_1, d_2, \dots, d_n$, 
\[\sum_{i=1}^n d_i^2 \leq m \left(\frac{2m}{n-1} + (n-2)\right).\]
\end{lemma}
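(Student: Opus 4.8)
The plan is to prove de Caen's inequality by an application of the Cauchy--Schwarz inequality to a quantity that simultaneously encodes the sum of squares of degrees and the edge structure. The natural object to consider is $\sum_{ij \in E} (d_i + d_j)$, summed over the $m$ edges of the graph, since this quantity is on the one hand expressible in terms of $\sum_i d_i^2$ and on the other hand amenable to a Cauchy--Schwarz bound that introduces $\sum_i d_i^2$ a second time, yielding a self-improving inequality.

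First I would record the exact identity $\sum_{ij \in E} (d_i + d_j) = \sum_{i=1}^n d_i^2$, which holds because each vertex $i$ contributes the term $d_i$ once for each of its $d_i$ incident edges. Next I would apply Cauchy--Schwarz to the same sum over edges, writing
\[
\left(\sum_{ij \in E} (d_i + d_j)\right)^2 \leq m \sum_{ij \in E} (d_i + d_j)^2.
\]
Combining these two displays gives $\left(\sum_i d_i^2\right)^2 \leq m \sum_{ij \in E}(d_i+d_j)^2$, so the problem reduces to bounding $\sum_{ij \in E}(d_i + d_j)^2$ from above in a way that feeds back the right quantities.

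For the final step I would expand $(d_i + d_j)^2 = 2(d_i^2 + d_j^2) - (d_i - d_j)^2 \leq 2(d_i^2 + d_j^2)$ and use the same edge-to-vertex counting as before to obtain $\sum_{ij \in E}(d_i+d_j)^2 \leq 2\sum_{ij \in E}(d_i^2 + d_j^2) = 2\sum_i d_i^3$. This turns the inequality into $\left(\sum_i d_i^2\right)^2 \leq 2m \sum_i d_i^3$, which is the crux: one must then pass from a bound involving $\sum_i d_i^3$ to the clean quadratic bound in the statement. The cleanest route is to avoid the cube entirely by bounding $\sum_{ij\in E}(d_i+d_j)^2$ differently. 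Since for each edge $ij$ one has $d_i + d_j \leq n$ trivially (degrees are at most $n-1$, and one can sharpen to account for the shared endpoints), I would instead split as $(d_i+d_j)^2 = (d_i+d_j)\cdot(d_i+d_j)$ and bound one factor by its maximum contribution while keeping the other summed, aiming to recover $m(n-1)$ and $2m^2$ type terms after regrouping.

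The main obstacle is reconciling the Cauchy--Schwarz output with the precise right-hand side $m\left(\frac{2m}{n-1} + (n-2)\right)$, since a naive application produces cruder bounds. The decisive trick, which I expect to be the heart of the argument, is to apply Cauchy--Schwarz not to $\sum_{ij \in E}(d_i + d_j)$ directly but to the codegree-weighted sum: for each vertex $i$, consider the number of paths of length two through $i$, namely $\binom{d_i}{2}$, and count paths globally. Writing $P_2$ for the number of paths of length two, one has $P_2 = \sum_i \binom{d_i}{2} = \frac{1}{2}\left(\sum_i d_i^2 - 2m\right)$, and an independent upper bound on $P_2$ coming from the fact that each pair of vertices spans at most one path-endpoint configuration constrained by the edge count gives exactly the target. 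I would therefore pivot to bounding $P_2$ above via the number of ordered vertex pairs and the total edge budget, then solve the resulting relation for $\sum_i d_i^2$; matching the constant $(n-2)$ will require careful attention to whether endpoints coincide, and that bookkeeping is where I anticipate the real work to lie.
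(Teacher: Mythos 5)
There is no proof in the paper to compare against: the paper uses this lemma as a black box, citing de Caen's article \cite{dC98}. Your proposal therefore has to stand on its own, and it does not -- it is three successive sketches, each abandoned before completion, and the two claims that would have to carry the final argument are false. First, your middle route asserts that ``for each edge $ij$ one has $d_i + d_j \leq n$ trivially.'' This is false: in $K_n$ every edge has $d_i + d_j = 2n-2$. The correct statement for an edge $ij$ is $d_i + d_j \leq n + |N(i) \cap N(j)|$, and the common-neighbour term cannot be discarded, because $K_n$ is an equality case of de Caen's bound, so any valid proof must remain tight there. Second, your final route rests on the claim that $P_2 = \sum_i \binom{d_i}{2}$, the number of paths of length two, is controlled because ``each pair of vertices spans at most one path-endpoint configuration.'' A pair of vertices $u,w$ is the endpoint pair of exactly $|N(u) \cap N(w)|$ such paths, which can be as large as $n-2$; in $K_n$ one has $P_2 = n\binom{n-1}{2}$, roughly $n$ times the number of vertex pairs. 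So no bound of the shape $P_2 \leq \binom{n}{2}$ exists, and the bound you would actually need, namely $P_2 \leq \frac{m^2}{n-1} + \frac{m(n-4)}{2}$, does not follow from any such pair count. The ``bookkeeping'' you defer to the end is not bookkeeping; it is the entire content of the lemma.

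For what it is worth, the part you did carry out is correct but provably too weak: the identity $\sum_{ij \in E}(d_i+d_j) = \sum_i d_i^2$ and the Cauchy--Schwarz consequence $\left(\sum_i d_i^2\right)^2 \leq 2m \sum_i d_i^3$ are fine, but feeding in $d_i \leq n-1$ only returns $\sum_i d_i^2 \leq 2m(n-1)$, and one can check that $\frac{2m^2}{n-1} + m(n-2) \leq 2m(n-1)$ for every graph, with equality only when $m = \binom{n}{2}$; so this route can never recover the stated inequality except in the single case $K_n$. The difficulty is that de Caen's bound is simultaneously tight for the spanning star $K_{1,n-1}$ and for $K_n$, and each of your proposed shortcuts is violated at one of these extremal graphs. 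A correct argument (de Caen's original one, which works vertex by vertex with the sums $\sum_{j \sim i} d_j$ and a partition of the edge set relative to each neighbourhood) is genuinely different from all three of your routes, so the proposal as written has a gap exactly where the lemma's real content lies.
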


\begin{theorem} \label{thm:tripexp}
Suppose that $S \subseteq \ZZ_2^t$ contains no non-trivial solutions to the equation $s_1 + s_2 = s'_1 + s'_2$ and $\lambda(\textrm{Cay}(\ZZ_2^t, S)) \leq (1 - \epsilon)d$. Then, for all subsets $F$ of the skeleton $E$ of $H(\ZZ_2^t, S)$ with $|F| \leq |E|/2$, $|N_T(F)| \geq \frac{\epsilon^2}{64} d|F|$.
\end{theorem}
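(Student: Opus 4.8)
The plan is to evaluate $|N_T(F)|$ clique by clique and reduce the whole statement to an anti-concentration estimate for how the edges of $F$ distribute among the cliques $C_x$; this is where the spectral hypothesis gets spent.

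First I would set up the local bookkeeping. Recall that every triple of $H$ arises from a unique clique $C_x=\{x+s:s\in S\}$, that each $C_x$ is a clique of the skeleton $\mathrm{Cay}(\ZZ_2^t,S')$, and that every edge of $E$ lies in exactly two of the sets $C_x$. Fix $F$, and for each $x$ let $G_x$ be the graph on $C_x$ with edge set $F\cap\binom{C_x}{2}$, writing $m_x=|E(G_x)|$; then $\sum_x m_x=2|F|$. A triple $t\subset C_x$ lies in $N_T(F)$ exactly when it has at least one edge in $F$ and at least one edge outside $F$, i.e. when it is a ``mixed'' triangle of $G_x$. Each such mixed triangle has exactly two vertices at which one incident edge lies in $F$ and the other does not, so counting these cherries vertex by vertex gives the exact identity
\[|N_T(F)|=\tfrac12\sum_x\sum_{a\in C_x}\deg_{G_x}(a)\bigl(d-1-\deg_{G_x}(a)\bigr)=2(d-1)|F|-\tfrac12\sum_x\sum_{a\in C_x}\deg_{G_x}(a)^2.\]

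Next I would control the quadratic term one clique at a time. Applying de Caen's inequality (Lemma~\ref{lem:deC}) to $G_x$, which has $d$ vertices and $m_x$ edges, gives $\sum_{a\in C_x}\deg_{G_x}(a)^2\le m_x\bigl(\tfrac{2m_x}{d-1}+d-2\bigr)$, and summing over $x$ using $\sum_x m_x=2|F|$ collapses the identity to
\[|N_T(F)|\ \ge\ d|F|-\tfrac{1}{d-1}\sum_x m_x^2.\]
Since $m_x\le\binom d2$ and $\sum_x m_x=2|F|$ we trivially have $\sum_x m_x^2\le d(d-1)|F|$, which only yields $|N_T(F)|\ge0$. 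Thus the whole theorem reduces to beating this trivial bound by a constant factor: it suffices to prove the key estimate $\sum_x m_x^2\le\bigl(1-\tfrac{\epsilon^2}{64}\bigr)d(d-1)|F|$, equivalently $\sum_x m_x\bigl(\binom d2-m_x\bigr)\ge\tfrac{\epsilon^2}{32}\binom d2|F|$.

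This key estimate is where I expect the real difficulty, and it is the only place both $\lambda\le(1-\epsilon)d$ and $|F|\le|E|/2$ enter. The point is that $m_x$ is the number of $F$-edges centred at $x$; replacing each $F$-edge by its (unique) pair of centres produces a subgraph $F^*$ of the skeleton $\mathrm{Cay}(\ZZ_2^t,S')$ with $|F|$ edges whose degree at $x$ is exactly $m_x$. By Lemma~\ref{lem:eig} this skeleton is an $(n,\binom d2,\mu)$-graph with $\mu=\tfrac12((1-\epsilon)^2d^2-d)$, hence a connected expander. The trivial bound is tight only when every $m_x\in\{0,\binom d2\}$, i.e. when $F^*$ saturates a vertex set $U$ and avoids its complement; but saturation forces every skeleton edge leaving $U$ into $F^*$, making $U$ a union of connected components, which is impossible for $\varnothing\ne F^*\subsetneq E$. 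To make this quantitative I would let $U$ be the nearly-saturated cliques and split $\sum_x m_x(\binom d2-m_x)$ along $U$: if a constant fraction of the $F$-weight sits off $U$ the bound is immediate, while if most of it sits on $U$ then the expander mixing lemma (Lemma~\ref{lem:eml}) bounds $e(U,U)$ in the skeleton and forces the $F^*$-edges incident to $U$ to overshoot the budget $|F|\le|E|/2$, a contradiction; the spectral gap $1-(1-\epsilon)^2$ governs the trade-off in this dichotomy. The main obstacle is purely quantitative: running the mixing lemma with all error terms so that the saving comes out of order $\epsilon^2$ and the constant lands at $\tfrac1{64}$, rather than any conceptual gap.
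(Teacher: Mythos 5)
Your reduction is correct, and it is essentially the paper's own argument in repackaged form: the paper likewise splits $F$ into the clique pieces $F_x$ (your $m_x=|F_x|$, with $\sum_x m_x=2|F|$ coming from the centre duality), applies de Caen's bound (Lemma~\ref{lem:deC}) clique by clique, and rests everything on a dichotomy over the nearly saturated cliques. Moreover, the key estimate you isolate is exactly what the paper's central claim supplies: with $U=\{x: m_x\ge(1-\delta)\binom{d}{2}\}$ and $\delta=\epsilon/8$, the paper proves $\sum_{x\notin U}m_x\ge\frac{\epsilon}{4}|F|$, and then your Case 1 computation gives
\[\sum_x m_x\left(\tbinom{d}{2}-m_x\right)\ \ge\ \delta\tbinom{d}{2}\sum_{x\notin U}m_x\ \ge\ \tfrac{\epsilon^2}{32}\tbinom{d}{2}|F|,\]
which feeds back through your identity to give $|N_T(F)|\ge\frac{\epsilon^2}{64}d|F|$ on the nose. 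So nothing is lost in your reduction and the constants land correctly.

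The gap is in the mechanism you propose for the complementary case, which is where all the content of the theorem sits. You claim that if most of the weight sits on $U$, then the expander mixing lemma applied to $e(U,U)$ forces the $F^*$-edges incident to $U$ to overshoot the budget $|F|\le|E|/2$, a contradiction. No such contradiction can be derived: the budget permits $|U|$ to be as large as roughly $n/2$, and a nearly saturated $F^*$ concentrated on such a $U$ has about $\frac12\binom{d}{2}|U|\approx|E|/2$ edges incident to $U$, perfectly consistent with $|F|\le|E|/2$; what is true is only that an $\Theta(\epsilon)$-fraction of the weight must leak out of $U$, and that is what must be proved directly, not contradicted. The working argument (the paper's) uses the two hypotheses differently. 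The budget is spent first, giving $|U|\le\frac{n}{2(1-\delta)}$ and hence $|U^c|\ge(1-2\delta)\frac{n}{2}$. The expander mixing lemma (via Lemma~\ref{lem:eig}) is then used to lower-bound the cut, $e(U,U^c)\ge\frac12(d^2-\lambda^2)\frac{|U||U^c|}{n}$. Finally --- the step absent from your sketch --- each skeleton edge of this cut corresponds under your centre duality either to an $F^*$-edge with an endpoint in $U^c$, contributing to $\sum_{x\in U^c}m_x$, or to a non-$F^*$-edge whose endpoint in $U$ then accounts for a missing clique edge there, and there are at most $\delta\binom{d}{2}|U|$ of those. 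Hence
\[\sum_{x\in U^c}m_x\ \ge\ \tfrac12(d^2-\lambda^2)\tfrac{|U||U^c|}{n}-\delta\tbinom{d}{2}|U|\ \ge\ \tfrac{\epsilon}{8}d^2|U|\ \ge\ \tfrac{\epsilon}{4}|F|,\]
using $\binom{d}{2}|U|\ge\sum_{x\in U}m_x\ge|F|$ in this case. One further caution on constants: if you run this count over $F^*$-\emph{edges} incident to $U$ (at least half the degree sum) rather than over endpoints or cut edges, you lose a factor of two and land at $\frac{\epsilon^2}{128}d|F|$; the cut-edge accounting above is what hits $\frac{\epsilon^2}{64}$.
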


\begin{proof}
For each $x$, let $F_x = \{e \in F : e \subset C_x\}$. As in the proof of Theorem~\ref{lem:comb}, the assumption that $S$ contains no non-trivial solutions to the equation $s_1 + s_2 = s'_1 + s'_2$ implies that each edge in $F$ is contained in precisely two sets $C_x$. Therefore, $\sum_{x \in \ZZ_2^t} |F_x| = 2 |F|$. Suppose now that $X = \{x \in \ZZ_2^t : |F_x| \geq (1 - \delta) \binom{d}{2}\}$, where $\delta = \frac{\epsilon}{8}$. We claim that $\sum_{x \in X^c} |F_x| \geq \frac{\epsilon}{4}|F|$.

To prove the claim, we may clearly assume that $\sum_{x \in X} |F_x| \geq |F|$, for otherwise, 
$$\sum_{x \in X^c} |F_x| = 2|F| - \sum_{x \in X} |F_x| \geq |F|.$$  
Consider now the number of edges $N(X, X^c)$ in Cay$(G, S')$ between $X$ and its complement $X^c$. By Lemma~\ref{lem:eig} and the expander mixing lemma, this number is at least
\[\binom{d}{2}\frac{|X||X^c|}{n} - \frac{1}{2} (\lambda^2 - d) \frac{|X||X^c|}{n} = \frac{1}{2} (d^2 - \lambda^2) \frac{|X||X^c|}{n}\]
for some $\lambda \leq (1 - \epsilon)d$. Suppose now that $x \in X$ and $x + s_1 + s_2 \in X^c$. If $e = (x + s_1, x + s_2)$ is in $F$, we see that $e \in F_{x + s_1 + s_2}$, contributing one to $\sum_{x \in X^c} |F_x|$. If it is not in $F$, it contributes one to $\sum_{x \in X} |F_x^c|$. Therefore,
\[\sum_{x \in X^c} |F_x| + \sum_{x \in X} |F_x^c| \geq N(X, X^c)\]
and, since $|F_x^c| \leq \delta \binom{d}{2}$ for each $x \in X$,
\[\sum_{x \in X^c} |F_x| \geq N(X, X^c) - \delta \binom{d}{2}|X| \geq \frac{1}{2} (d^2 - \lambda^2) \frac{|X||X^c|}{n} - \delta \binom{d}{2}|X|.\]
Note now, since $|F| \leq \frac{1}{2} |E| = \frac{1}{4} \binom{d}{2} n$, that 
\[|X| \leq \frac{2|F|}{(1 - \delta) \binom{d}{2}} \leq  \frac{n}{2(1 - \delta)} \leq (1 + 2\delta)\frac{n}{2},\]
so $|X^c| \geq (1 - 2\delta)\frac{n}{2}$. Therefore, since $\lambda^2 \leq (1 - \epsilon) d^2$ and $\delta = \frac{\epsilon}{8}$,
\begin{align*}
\sum_{x \in X^c} |F_x| & \geq \frac{1}{2} (d^2 - \lambda^2) \frac{|X||X^c|}{n} - \delta \binom{d}{2}|X| \\
& \geq \frac{1}{4} \left((d^2 - \lambda^2) (1 - 2\delta) - 2 \delta d^2\right) |X| \\
& \geq \frac{1}{4} \left(\epsilon (1 - 2 \delta) - 2 \delta\right) d^2 |X| \geq \frac{\epsilon}{8} d^2 |X|.
\end{align*}
Since $\binom{d}{2}|X| \geq \sum_{x \in X}|F_x| \geq |F|$, the required claim, that $\sum_{x \in X^c} |F_x| \geq \frac{\epsilon}{4}|F|$, now follows.

Given $x \in X^c$, let $N_T(F_x)$ denote the set of triples $t \subset C_x$ such that $f \subset t$ for some $f \in F_x$ and $t \notin \Delta(F_x)$. Then
\[|N_T(F_x)| = \frac{1}{2} \sum_{y \in C_x} d(y) (d - 1 - d(y)),\]
where $d(y)$ is the degree of $y$ in the graph whose edges are $F_x$. This is because $N_T(F_x)$ includes every triple which contains an edge of both $F_x$ and $E_x \setminus F_x$ and the factor of $1/2$ accounts for the fact that we will include any admissible triple twice in this count. Now, by Lemma~\ref{lem:deC},
\[\sum_{y \in C_x} d^2(y) \leq |F_x| \left( \frac{2 |F_x|}{d-1} + d - 2\right),\]
so that, since $\sum_{y \in C_x} d(y) = 2|F_x|$,
\[|N_T(F_x)| \geq (d-1) |F_x| - \frac{|F_x|^2}{d-1} - \frac{1}{2} (d-2) |F_x| = \frac{d}{2} |F_x| - \frac{|F_x|^2}{d-1}.\]
Therefore, since $|F_x| \leq (1 - \delta) \binom{d}{2}$ for $x \in X^c$, $|N_T(F_x)| \geq \frac{\delta}{2} d |F_x|$. Since no triple appears in more than one $C_x$, it follows that
\[|N_T(F)| \geq \sum_{x \in X^c} |N_T(F_x)| \geq \frac{\delta}{2} d \sum_{x \in X^c} |F_x| \geq \frac{\epsilon^2}{64} d |F|,\]
as required.
\end{proof}

The edge expansion property from Theorem~\ref{thm:main} now follows as a simple corollary.

\begin{corollary} \label{cor:edgeexp}
Suppose that $S \subseteq \ZZ_2^t$ contains no non-trivial solutions to the equation $s_1 + s_2 = s'_1 + s'_2$ and $\lambda(\textrm{Cay}(\ZZ_2^t, S)) \leq (1 - \epsilon)d$. Then, for all subsets $F$ of the skeleton $E$ of $H(\ZZ_2^t, S)$ with $|F| \leq |E|/2$, $|N_E(F)| \geq \frac{\epsilon^2}{128} |F|$.
\end{corollary}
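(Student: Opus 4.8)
The plan is to derive the edge expansion bound $|N_E(F)| \geq \frac{\epsilon^2}{128}|F|$ directly from the triple expansion bound $|N_T(F)| \geq \frac{\epsilon^2}{64} d|F|$ established in Theorem~\ref{thm:tripexp}. The key idea is that every new triple $t$ counted in $N_T(F)$ must contain at least one edge lying outside $F$, and each such outside edge is itself a candidate for membership in the edge neighbourhood $N_E(F)$. Thus I would set up a counting argument that bounds $|N_T(F)|$ from above in terms of $|N_E(F)|$, and then combine this with the lower bound on $|N_T(F)|$ to extract the desired lower bound on $|N_E(F)|$.

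The first step is to make precise how triples in $N_T(F)$ relate to edges in $N_E(F)$. By definition, a triple $t \in N_T(F)$ satisfies $t \supset f$ for some $f \in F$ but $t \notin \Delta(F)$, meaning $t$ is not entirely contained in $F$. Writing $t = (u,v,w)$ with the edge $f \in F$ say equal to $(u,v)$, at least one of the other two pairs, $(u,w)$ or $(v,w)$, is not in $F$. Such a pair $e$ is an edge of the skeleton (since it lies in the triple $t$) and satisfies $e \cup f \in T$, so $e \in N_E(F)$ provided $e \notin F$. This shows every triple of $N_T(F)$ witnesses at least one edge of $N_E(F)$.

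The second step is to bound the multiplicity of this correspondence, that is, to count how many triples of $N_T(F)$ can be associated with a single fixed edge $e \in N_E(F)$. A triple $t$ containing a fixed edge $e = (u,w)$ is determined by its third vertex, and in the hypergraph $H(\ZZ_2^t, S)$ every pair of vertices lies in exactly $2d - 4$ triples (as noted in the introduction, under the solution-free hypothesis). Hence each edge $e \in N_E(F)$ is contained in at most $2d-4 < 2d$ triples, so it can be charged by at most this many triples of $N_T(F)$. Combining the two steps gives $|N_T(F)| \leq 2d\, |N_E(F)|$, and substituting the triple expansion lower bound yields
\[
2d\,|N_E(F)| \geq |N_T(F)| \geq \frac{\epsilon^2}{64} d |F|,
\]
from which $|N_E(F)| \geq \frac{\epsilon^2}{128}|F|$ follows immediately.

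I expect the main point requiring care, rather than the main obstacle, to be pinning down the exact charging multiplicity: one must confirm that the relevant pairs of a counted triple genuinely lie in $N_E(F)$ (i.e.\ are skeleton edges outside $F$) and that the per-edge triple count is controlled by the uniform value $2d-4$ guaranteed by the no-non-trivial-solutions hypothesis. Since the factor of $2$ between $\frac{\epsilon^2}{64}$ and $\frac{\epsilon^2}{128}$ matches exactly the bound $|N_T(F)| \leq 2d\,|N_E(F)|$, the argument should close cleanly without any further spectral input, the heavy lifting having already been done in Theorem~\ref{thm:tripexp}.
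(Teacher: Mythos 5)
Your proposal is correct and is essentially identical to the paper's proof: both deduce the result from Theorem~\ref{thm:tripexp} by charging each triple of $N_T(F)$ to an edge of $N_E(F)$ it contains and observing that each skeleton edge lies in at most $2d$ triples (the paper uses the bound $2d$, you use the exact value $2d-4$), so the bound follows by division. No gaps; the extra care you flag about the charging multiplicity is exactly the content of the paper's one-line argument.
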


\begin{proof}
By Theorem~\ref{thm:tripexp}, there are at least $\frac{\epsilon^2}{64} d|F|$ triples which contain an edge from both $F$ and $E \setminus F$. Since each edge in $E \setminus F$ is contained in at most $2d$ of these triples, the result follows by division.
\end{proof}

We will prove Corollary~\ref{cor:mixing} on the rapid mixing of the random walk on the edges of $H(\ZZ_2^t, S)$ by constructing an auxiliary graph $G$ and then appealing to the following result~\cite[Theorem 3.3]{HLW06}.

\begin{lemma} \label{lem:mix}
Let $G$ be an $N$-vertex $D$-regular graph with $\lambda = \max_{i \neq 1} |\lambda_i(G)|$ and $\mathbf{p}_0$ a probability distribution on $V(G)$. Then the random walk on $G$ starting from the initial distribution $\mathbf{p}_0$ satisfies
\[\|\mathbf{p}_i - \mathbf{u}\|_2 \leq \left(\frac{\lambda}{D}\right)^i,\]
where $\mathbf{p}_i$ is the probability distribution on $V(G)$ after $i$ steps of the walk, $\mathbf{u}$ is the uniform distribution on $V(G)$ and $\|\mathbf{x}\|_2 = (\sum_i x_i^2)^{1/2}$ for $\mathbf{x} = (x_1, x_2, \dots, x_N) \in \mathbb{R}^N$.
\end{lemma}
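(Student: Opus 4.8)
The plan is to analyse the walk through its transition matrix $M = \frac{1}{D} A$, where $A$ is the adjacency matrix of $G$, and exploit the fact that regularity makes $M$ symmetric. First I would set up the spectral picture. Because $G$ is $D$-regular, $A$ and hence $M$ are real symmetric matrices, so $M$ admits an orthonormal eigenbasis $\mathbf{w}_1, \dots, \mathbf{w}_N$ with real eigenvalues $\mu_i = \lambda_i(G)/D$. The top eigenvalue is $\mu_1 = 1$, attained by the normalised all-ones vector, which is exactly (a scalar multiple of) the uniform distribution $\mathbf{u}$; every other eigenvalue satisfies $|\mu_i| \leq \lambda/D$ by the definition of $\lambda$. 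The only point needing care in the set-up is the convention: writing a distribution as a column vector, one step of the walk sends $\mathbf{p}$ to $M^{T}\mathbf{p}$, but symmetry of $M$ gives $M^{T} = M$, so after $i$ steps $\mathbf{p}_i = M^i \mathbf{p}_0$, and in particular $M\mathbf{u} = \mathbf{u}$.

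Next I would decompose the initial distribution relative to the uniform one. Set $\mathbf{v} = \mathbf{p}_0 - \mathbf{u}$. Since both $\mathbf{p}_0$ and $\mathbf{u}$ have coordinates summing to $1$, the vector $\mathbf{v}$ has coordinates summing to $0$, i.e.\ $\mathbf{v}$ is orthogonal to the all-ones vector and therefore lies in the span of $\mathbf{w}_2, \dots, \mathbf{w}_N$. Using $M\mathbf{u} = \mathbf{u}$, I get
\[
\mathbf{p}_i - \mathbf{u} = M^i \mathbf{p}_0 - \mathbf{u} = M^i(\mathbf{u} + \mathbf{v}) - \mathbf{u} = M^i \mathbf{v}.
\]
Because $\mathbf{v}$ lives entirely in the orthogonal complement of the top eigenvector, applying $M$ contracts its norm by a factor of at most $\lambda/D$ per step, so that $\|M^i \mathbf{v}\|_2 \leq (\lambda/D)^i \|\mathbf{v}\|_2$; this is immediate once $\mathbf{v}$ is expanded in the orthonormal eigenbasis and the eigenvalue bound $|\mu_i| \le \lambda/D$ is applied termwise.

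Finally I would bound the starting distance $\|\mathbf{v}\|_2 = \|\mathbf{p}_0 - \mathbf{u}\|_2$ by $1$. A direct computation gives $\|\mathbf{p}_0 - \mathbf{u}\|_2^2 = \|\mathbf{p}_0\|_2^2 - 1/N$, using $\langle \mathbf{p}_0, \mathbf{u}\rangle = 1/N$ and $\|\mathbf{u}\|_2^2 = 1/N$; and since the coordinates of $\mathbf{p}_0$ are nonnegative and sum to $1$, we have $\|\mathbf{p}_0\|_2^2 \leq \|\mathbf{p}_0\|_1 = 1$, whence $\|\mathbf{v}\|_2 \leq 1$. Combining the three steps yields $\|\mathbf{p}_i - \mathbf{u}\|_2 \leq (\lambda/D)^i$, as claimed. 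There is no serious obstacle here: the argument is the standard spectral mixing estimate, and the only places demanding attention are confirming that regularity makes $M$ symmetric (so that the orthogonal decomposition and the spectral norm bound are legitimate) and that the uniform distribution is the top eigenvector fixed by $M$; everything else is routine linear algebra.
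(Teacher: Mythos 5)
Your proof is correct and is precisely the standard spectral argument: the paper itself does not prove this lemma but quotes it as Theorem 3.3 of the Hoory--Linial--Wigderson survey \cite{HLW06}, and the proof given there is the same decomposition $\mathbf{p}_0 = \mathbf{u} + \mathbf{v}$ with $\mathbf{v}$ orthogonal to the all-ones vector, termwise contraction by $\lambda/D$ in the eigenbasis of the symmetric transition matrix, and the bound $\|\mathbf{p}_0 - \mathbf{u}\|_2 \leq \|\mathbf{p}_0\|_2 \leq 1$. No gaps: you correctly use regularity for symmetry of $M = A/D$, invariance of the complement of the top eigenvector, and nonnegativity of $\mathbf{p}_0$ for the initial bound.
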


To apply this lemma, we need to estimate $\lambda$. Recall that the \emph{edge expansion ratio} of a graph $G$ is 
\[h(G) = \min_{\{U : |U| \leq |V|/2\}} \frac{e(U, U^c)}{|U|}.\]
The following discrete analogue of the Cheeger inequality, due to Dodziuk~\cite{D84} and Alon and Milman~\cite{AM85}, places an upper bound on the second eigenvalue $\lambda_2$ of a graph $G$ in terms of its edge expansion ratio.

\begin{lemma} \label{lem:Cheeger}
If $G$ is an $N$-vertex $D$-regular graph with eigenvalues $\lambda_1 \geq \dots \geq \lambda_N$, then
\[\lambda_2 \leq D - \frac{h(G)^2}{2D}.\]
\end{lemma}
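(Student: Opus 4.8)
The plan is to prove the equivalent spectral-gap bound $D - \lambda_2 \geq h(G)^2/(2D)$, working throughout with the Laplacian $L = DI - A$, whose eigenvalues are $D - \lambda_i$. Since $G$ is $D$-regular, the all-ones vector $\mathbf{1}$ is the top eigenvector of $A$ and the bottom eigenvector of $L$, so by the variational principle and the standard identity $f^T L f = \sum_{(u,v) \in E}(f_u - f_v)^2$ we have
\[D - \lambda_2 = \min_{\{f \,:\, f \perp \mathbf{1},\, f \neq 0\}} \frac{\sum_{(u,v)\in E}(f_u - f_v)^2}{\sum_v f_v^2}.\]
(If $G$ is disconnected then $\lambda_2 = D$ and $h(G) = 0$, so the inequality is trivial.) It therefore suffices to exhibit, starting from a $\lambda_2$-eigenvector, a test function whose Rayleigh quotient is at most $D - \lambda_2$ and at least $h(G)^2/(2D)$.

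First I would reduce to a nonnegative, one-sided test function. Let $g$ be an eigenvector for $\lambda_2$; negating if necessary, arrange that $V^+ = \{v : g_v > 0\}$ has $|V^+| \leq N/2$, and set $f_v = \max(g_v, 0)$. Using $(Lg)_v = (D - \lambda_2)g_v$ together with the edge identity $f^T L g = \sum_{(u,v) \in E}(f_u - f_v)(g_u - g_v)$, I would check edge by edge that each summand is at least $(f_u - f_v)^2$: the only nontrivial case is an edge with one endpoint $u \in V^+$ and one endpoint $v$ having $g_v \leq 0$, where $g_u(g_u - g_v) \geq g_u^2$. Since $\sum_v f_v g_v = \sum_v f_v^2$, this yields $(D - \lambda_2)\sum_v f_v^2 = f^T L g \geq f^T L f$, so $f$ has Rayleigh quotient at most $D - \lambda_2$.

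Next I would bound the Rayleigh quotient of $f$ from below by $h(G)^2/(2D)$ via a Cauchy--Schwarz split,
\[\Big(\sum_{(u,v)\in E}|f_u^2 - f_v^2|\Big)^2 \leq \sum_{(u,v)\in E}(f_u - f_v)^2 \cdot \sum_{(u,v)\in E}(f_u + f_v)^2,\]
combined with two estimates. For the second factor, $D$-regularity gives $\sum_{(u,v)\in E}(f_u + f_v)^2 \leq 2\sum_{(u,v)\in E}(f_u^2 + f_v^2) = 2D\sum_v f_v^2$. For the left-hand side I would use a co-area (layer-cake) argument: ordering the vertices so that $f_1 \geq \dots \geq f_N \geq 0$ and writing $U_t = \{v : f_v^2 > t\}$, one has $\sum_{(u,v)\in E}|f_u^2 - f_v^2| = \int_0^\infty e(U_t, U_t^c)\, dt$. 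Since $f$ is supported on $V^+$, every level set satisfies $U_t \subseteq V^+$, hence $|U_t| \leq N/2$ and $e(U_t, U_t^c) \geq h(G)|U_t|$; integrating against $\int_0^\infty |U_t|\,dt = \sum_v f_v^2$ gives $\sum_{(u,v)\in E}|f_u^2 - f_v^2| \geq h(G)\sum_v f_v^2$. Substituting both estimates and cancelling a factor $\sum_v f_v^2$ produces $\sum_{(u,v)\in E}(f_u - f_v)^2 \geq \frac{h(G)^2}{2D}\sum_v f_v^2$.

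Chaining the two inequalities gives $D - \lambda_2 \geq h(G)^2/(2D)$, which is the claim. The main obstacle is the interplay between the truncation to the positive part and the co-area step: one must guarantee that every level set $U_t$ stays inside the small side $V^+$, which is exactly why we arrange $|V^+| \leq N/2$ and replace $g$ by $\max(g,0)$, since this is what licenses the bound $e(U_t, U_t^c) \geq h(G)|U_t|$ uniformly in $t$. The edge-by-edge verification that truncation does not raise the Rayleigh quotient is elementary, but it is the step where $D$-regularity and the sign structure of the eigenvector are genuinely used.
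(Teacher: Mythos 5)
The paper offers no proof of Lemma~\ref{lem:Cheeger} to compare against: it is stated as a known result, attributed to Dodziuk~\cite{D84} and Alon and Milman~\cite{AM85}, and then used as a black box in the proof of Corollary~\ref{cor:mixing}. Your argument must therefore be judged against the literature, and it is a correct, self-contained rendering of the standard proof of this (harder) direction of the discrete Cheeger inequality. The details check out: since $g \perp \mathbf{1}$ and $g \neq 0$, both signs occur in $g$, so negating if necessary gives a nonempty $V^+$ with $|V^+| \leq N/2$; the edge-by-edge inequality $(f_u - f_v)(g_u - g_v) \geq (f_u - f_v)^2$ holds in all three cases (both endpoints in $V^+$, both outside, one of each, the last using $g_u > 0 \geq g_v$), which together with $\sum_v f_v g_v = \sum_v f_v^2$ gives $f^T L f \leq f^T L g = (D - \lambda_2)\sum_v f_v^2$; and the Cauchy--Schwarz split, the regularity bound $\sum_{(u,v)\in E}(f_u + f_v)^2 \leq 2D\sum_v f_v^2$, and the layer-cake identity are all sound, with the decisive observation being exactly the one you flag, namely that every level set $U_t$ lies inside $V^+$ and so has size at most $N/2$, licensing $e(U_t, U_t^c) \geq h(G)|U_t|$ uniformly in $t$. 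Two minor cosmetic points: the variational characterization of $D - \lambda_2$ you state at the outset is never actually invoked, since your Step 1 bounds the Rayleigh quotient of $f$ directly against the eigenvalue, which is all the final chaining needs; and strictly speaking Step 1 also requires $\sum_v f_v^2 > 0$, which follows from $V^+ \neq \emptyset$ as noted above. Given that the paper leans on an external citation here, a worked proof like yours is a legitimate way to make the argument self-contained.
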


To estimate $\lambda_N$, we use a result of Desai and Roy~\cite{DR94}. To state their result, for any subset $U$ of the vertex set of a graph $G$, we define $b(U)$ to be the minimum number of edges that need to be removed from the induced subgraph $G[U]$ to make it bipartite.

\begin{lemma} \label{lem:DR}
If $G$ is an $N$-vertex $D$-regular graph with eigenvalues $\lambda_1 \geq \dots \geq \lambda_N$, then
\[\lambda_N \geq -D + \frac{\Psi^2}{4D},\]
where
\[\Psi = \min_{U \neq \emptyset} \frac{b(U) + e(U, U^c)}{|U|}.\]
\end{lemma}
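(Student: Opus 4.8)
The plan is to prove this via the ``signless'' analogue of the Cheeger bound in Lemma~\ref{lem:Cheeger}, mirroring that argument but replacing each difference $f_i - f_j$ by the sum $f_i + f_j$. First I would record a variational description of $\lambda_N$. Writing $A$ for the adjacency matrix of the $D$-regular graph $G$ and setting $Q = DI + A$, the eigenvalues of $Q$ are $D + \lambda_i$, so its smallest eigenvalue is $D + \lambda_N \geq 0$. Since
\[ f^{T} Q f = \sum_{\{i,j\} \in E} (f_i + f_j)^2 \]
for every $f \in \mathbb{R}^N$, taking $f$ to be a unit eigenvector for $\lambda_N$ gives
\[ D + \lambda_N = \frac{\sum_{\{i,j\} \in E}(f_i + f_j)^2}{\sum_i f_i^2}. \]
It therefore suffices to show $\Psi \le \sqrt{2D(D + \lambda_N)}$, since this yields $\lambda_N \geq -D + \Psi^2/(2D) \geq -D + \Psi^2/(4D)$, which is even slightly stronger than required.

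To turn the eigenvector $f$ into a combinatorial witness for $\Psi$, I would use a randomised threshold rounding, the signed analogue of the level sets in the classical proof. Normalise $f$ so that $\max_i |f_i| = 1$ and, for $t$ chosen uniformly in $[0,1]$, put $U = U^{(t)} = \{i : f_i^2 \geq t\}$ and define $g_i = g_i^{(t)} = \mathrm{sign}(f_i)$ for $i \in U$ and $g_i = 0$ otherwise. Colouring $U$ by these signs shows $b(U) \le M^{(t)}$, where $M^{(t)}$ is the number of edges inside $U$ whose endpoints receive the same sign; hence $b(U^{(t)}) + e(U^{(t)}, (U^{(t)})^c) \le B^{(t)}$, where $B^{(t)}$ counts all edges $\{i,j\}$ with $g_i + g_j \neq 0$, i.e. the monochromatic-inside-$U$ edges ($g_i + g_j = \pm 2$) together with the boundary edges ($g_i + g_j = \pm 1$). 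A direct computation of the bad-edge probability in the two cases (equal or opposite signs of $f_i, f_j$) then yields the uniform bound $\pr[\{i,j\}\ \text{bad}] \le |f_i + f_j|\,(|f_i| + |f_j|)$, while $\mathbb{E}[|U^{(t)}|] = \sum_i f_i^2$.

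The heart of the argument is the Cauchy--Schwarz estimate that follows. Summing the per-edge bound,
\[ \mathbb{E}[B^{(t)}] \le \sum_{\{i,j\} \in E} |f_i + f_j|\,(|f_i| + |f_j|) \le \Big(\sum_{\{i,j\}}(f_i + f_j)^2\Big)^{1/2} \Big(\sum_{\{i,j\}}(|f_i| + |f_j|)^2\Big)^{1/2}. \]
The first factor equals $\big((D + \lambda_N)\sum_i f_i^2\big)^{1/2}$ by the Rayleigh identity, and the second is at most $\big(2D\sum_i f_i^2\big)^{1/2}$ using $(|f_i| + |f_j|)^2 \le 2(f_i^2 + f_j^2)$ and $D$-regularity. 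Thus $\mathbb{E}[B^{(t)}] \le \sqrt{2D(D+\lambda_N)}\,\mathbb{E}[|U^{(t)}|]$, and averaging over $t$ produces a threshold with $B^{(t)} \le \sqrt{2D(D+\lambda_N)}\,|U^{(t)}|$; here $U^{(t)}$ is automatically nonempty, since the vertex attaining $\max_i f_i^2 = 1$ lies in $U^{(t)}$ for every $t \in [0,1]$. This gives $\Psi \le \sqrt{2D(D+\lambda_N)}$, as needed.

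I expect the main obstacle to be setting up the rounding correctly: one must choose a single $\{-1,0,+1\}$-valued function that simultaneously encodes a candidate set $U$ (its support) and a $2$-colouring of $U$ (the signs), and then verify both that the one quantity $B^{(t)}$ dominates $b(U) + e(U,U^c)$ and that it admits the clean per-edge bound $|f_i + f_j|(|f_i| + |f_j|)$ in both sign cases. Establishing that bound is the only delicate point; the spectral reformulation, the Cauchy--Schwarz step, and the averaging are all routine.
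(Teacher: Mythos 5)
The paper offers no internal proof of this lemma to compare against: it is imported verbatim as a known result of Desai and Roy~\cite{DR94}. Your argument is correct and is, in essence, a reconstruction of the standard proof of that result, namely the ``signless'' analogue of the threshold-rounding proof of the Cheeger bound (Lemma~\ref{lem:Cheeger}), based on the identity $f^T(DI+A)f = \sum_{\{i,j\}\in E}(f_i+f_j)^2$. The one delicate point you flagged, the per-edge bound, does hold in all cases: if $f_i,f_j$ have equal (nonzero) signs, the edge is bad precisely when some endpoint lies in $U^{(t)}$, which has probability $\max(f_i^2,f_j^2) \le (|f_i|+|f_j|)^2 = |f_i+f_j|\,(|f_i|+|f_j|)$; if the signs are opposite, it is bad precisely when exactly one endpoint lies in $U^{(t)}$, which has probability $|f_i^2-f_j^2| = |f_i+f_j|\,(|f_i|+|f_j|)$ exactly; and if, say, $f_j=0$, the bad probability is $f_i^2$, again equal to the bound. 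Together with $\mathbb{E}\bigl[|U^{(t)}|\bigr] = \sum_i f_i^2$ (valid thanks to your normalisation $\max_i |f_i|=1$), the Rayleigh identity $\sum_{\{i,j\}\in E}(f_i+f_j)^2 = (D+\lambda_N)\sum_i f_i^2$, and the regularity estimate $\sum_{\{i,j\}\in E}(|f_i|+|f_j|)^2 \le 2D\sum_i f_i^2$, the Cauchy--Schwarz and averaging steps go through and give $\Psi \le \sqrt{2D(D+\lambda_N)}$ as you claim. Note that you in fact prove something strictly stronger than the statement: $\lambda_N \ge -D + \Psi^2/(2D)$, matching the constant of Lemma~\ref{lem:Cheeger}, whereas the $4D$ in the quoted form of~\cite{DR94} is lossier; the factor of two is immaterial for the only use made of the lemma here, in the proof of Corollary~\ref{cor:mixing}, which needs only $\lambda_N \ge (-1+c\epsilon^4)D$.
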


{\bf Proof of Corollary~\ref{cor:mixing}:}
Consider the auxiliary graph $G$ whose vertices $V$ are the edges of the skeleton $E$ and where two vertices are joined if the union of the corresponding edges $e_1, e_2 \in E$ is in $T$. Note that $G$ is an $N$-vertex $D$-regular graph with $N = \frac{1}{2} \binom{d}{2} n$ and $D = 4d-8$. The random walk on $G$ is in one-to-one correspondence with the random walk on the original hypergraph $H(\ZZ_2^t, S)$ and the fact that $H$ is an $\frac{\epsilon^2}{64}d$-triple-expander easily implies that for any $U \subseteq V$ with $|U| \leq |V|/2$ the number of edges between $U$ and $U^c$ is at least $\frac{\epsilon^2}{64}d |U|$. Therefore, $h(G) \geq \frac{\epsilon^2}{64} d \geq \frac{\epsilon^2}{256}D$. By Lemma~\ref{lem:Cheeger}, this implies that 
\[\lambda_2(G) \leq D - \frac{h(G)^2}{2D} \leq \left(1 - \frac{\epsilon^4}{2^{17}}\right)D.\]
To estimate $\Psi$, we split into two cases. If $|U| < \frac{15}{16}N$, we use the fact that 
\[e(U, U^c) \geq \frac{\epsilon^2}{64}d \min(|U|, |U^c|) \geq \frac{\epsilon^2}{2^{10}} d |U|\] 
to conclude that $e(U, U^c)/|U| \geq \frac{\epsilon^2}{2^{10}} d$. On the other hand, if $|U| \geq \frac{15}{16} N$, the corresponding edge set $F$ in the skeleton $E$ of $H$ has at least $\frac{3}{4} \binom{d}{2}$ edges in at least $\frac{3}{4} n$ of the sets $C_x$. By supersaturation, there exists a constant $c > 0$ such that each $C_x$ with at least $\frac{3}{4} \binom{d}{2}$ edges has at least $cd^3$ triangles with all edges in $F$. As there are at least $\frac{3}{4} n$ sets $C_x$ with this property, $F$ contains at least $\frac{3}{4} c d^3 n$ triangles, which in turn implies that $G[U]$ contains at least $\frac{3}{4} c d^3 n$ triangles. Since $G$ is an edge-disjoint union of triangles, the number of edges which must be removed to make $G[U]$ bipartite is at least the number of triangles in $G[U]$. That is, $b(U)$ is at least $\frac{3}{4} c d^3 n$, so $b(U)/|U| \geq c d$. In either case, we see that $\Psi = \Omega(\epsilon^2 D)$ and, hence, by Lemma~\ref{lem:DR}, there is a positive constant $c$, which we may assume is at most $2^{-17}$, such that 
\[\lambda_N(G) \geq -D + \frac{\Psi^2}{4D} \geq (-1 + c \epsilon^4) D.\]
Putting everything together, we see that $\lambda = \max(|\lambda_2(G)|, |\lambda_N(G)|) \leq (1 - c \epsilon^4) D$. Therefore, applying Lemma~\ref{lem:mix},
\[\|\mathbf{p}_i - \mathbf{u}\|_2 \leq \left(\frac{\lambda}{D}\right)^i \leq (1 - c \epsilon^4)^i,\]
as required.
\qed

\section{Further remarks}

{\bf Generalised constructions.}

For simplicity, we have worked throughout with the group $\ZZ_2^t$. However, a similar construction works over any abelian group $G$. Indeed, given a subset $S$ of $G$, we can let $H(G, S)$ be the $3$-uniform hypergraph with vertex set $G$ and edge set consisting of all triples of the form $(x + s_1, x+ s_2, x+ s_3)$, where $x \in G$ and $s_1, s_2, s_3 \in S \cup (-S)$ with $s_i \neq \pm s_j$ for $i \neq j$.  Alternatively, $H$ is a $3$-uniform hypergraph on the same vertex set whose triples correspond to certain degenerate triangles in Cay$(G, S')$, where
\[S' = \{s_1 + s_2 : s_1, s_2 \in S \cup (-S), s_1 \neq \pm s_2\}.\]
It is worth noting that we omit edges of the form $(x, x + s + s)$, where they exist, since they will typically only be contained in one set of the form $C_x = \{x + s: s \in S \cup (-S)\}$. Over $\ZZ_2^t$, such edges do not exist, so this issue does not arise.

It is also possible to define a variant of our construction by using longer sums. For instance, given a subset $S$ of $\ZZ_2^t$, we may let
\[S' = \{s_1 + \dots + s_{2\ell} : s_i \in S, s_i \neq s_j\}\]
and take $H$ to again be a $3$-uniform hypergraph whose triples correspond to certain degenerate triangles in Cay$(\ZZ_2^t, S')$. However, this generalised construction seems to have few tangible benefits over the $\ell = 1$ case, so we have not pursued it further.

\vspace{3mm}

{\bf Vertex expansion.}

Given a subset $F$ of the skeleton $E$ of a hypergraph $H$, one may also define its \emph{vertex neighbourhood}
\[N_V(F) = \{v \in V(H) : v \cup f \in T \textrm{ for some } f \in F \textrm{ and } v \notin V(F)\},\]
where $V(F)$ is the set of vertices of $H$ which are contained in some edge of $F$. Assuming $H$ has $n$ vertices, we then let
\[h_V(H) = \min_{\{F: |V(F)| \leq n/2\}} \frac{|N_V(F)|}{|V(F)|}\]
and say that $H$ is an \emph{$\epsilon$-vertex-expander} if $h_V(H) \geq \epsilon$. One might now ask if our construction $H(\ZZ_2^t, S)$ has this vertex expansion property for a random choice of $S$. This problem seems surprisingly delicate and we were unable to decide in which direction the truth lies. A positive solution would be of substantial interest and is likely to facilitate applications to extremal combinatorics, such as to the determination of the size Ramsey number of tight paths (see~\cite{DLMR17, LW18} for the current status of this problem). It would also be of great interest to find alternative constructions, preferably of bounded degree, with this vertex expansion property. 

\vspace{3mm}

{\bf Coboundary and cosystolic expansion.}

The progress by Evra, Kaufman, Kazhdan and Lubotzky~\cite{EK17, KKL16} on constructing bounded-degree hypergraphs with the topological overlap property stems from a connection to another, more combinatorial, expansion property known as coboundary expansion~\cite{DK12, LM06}. We will not attempt to describe this property here, but suffice to say that coboundary expansion and a slightly weaker notion known as cosystolic expansion are both known to imply the topological overlap property~\cite{DKW16, G10}. The papers~\cite{EK17, KKL16} (and the related work in~\cite{LLR15, LM15}) then proceed by showing that the constructions under consideration are cosystolic expanders, from which the desired topological overlap property follows.

In assessing whether our construction gives cosystolic expanders, it is tempting to appeal to a criterion established by Evra and Kaufman~\cite{EK17}. In the $3$-uniform case, this roughly says that if a hypergraph $H$ has the property that the skeleton graph and the link of each vertex are good expander graphs, then $H$ is a cosystolic expander. Unfortunately, this result does not apply in our situation, since the links in our construction are not good expanders. Nevertheless, we are still willing to conjecture that our construction yields cosystolic, and perhaps even coboundary, expanders.

\vspace{3mm}

{\bf Higher uniformities.}

The construction given in this paper does not seem to generalise to higher uniformities. To see why, recall that, given a set $S$ and $x \in \ZZ_2^t$, we define $C_x = \{x + s : s \in S\}$. The principal reason our $3$-uniform construction goes through is that any edge $(x+s_1, x+s_2)$ in $C_x$ also appears in $C_{x+s_1+s_2}$ as $((x+s_1+s_2) + s_2, (x+s_1+s_2) + s_1)$. The natural analogue of this observation in the $4$-uniform case is to consider a triple $(x+s_1, x+s_2, x+s_3)$ in $C_x$ and to note that this triple can be rewritten as $((x+s_1+s_2 + s_3) + s_2 + s_3, (x+s_1+s_2 + s_3) + s_3 + s_1, (x+s_1+s_2 + s_3) + s_1 + s_2)$. Therefore, if $s_i + s_j$ is in $S$ for all $i \neq j$, we see that the triple $(x+s_1, x+s_2, x+s_3)$ is also in $C_{x + s_1 + s_2 + s_3}$. However, the requirement that $s + s'$ is in $S$ for any distinct $s, s' \in S$ is a very strong one, implying that $S$ contains every non-zero element in its span. Since $S$ needs to span all of $\ZZ_2^t$ for Cay$(\ZZ_2^t, S)$ to even be connected, it would need to contain all non-zero elements of $\ZZ_2^t$. The construction would then reduce to taking the complete $4$-uniform hypergraph on $\ZZ_2^t$. However, despite the failure of this particular mechanism, it remains a very interesting problem to find simple constructions of sparse expanders in higher uniformities.

\vspace{5mm}
\noindent
{\bf Note added.} Since the first version of this paper appeared, there has been significant progress on some of the questions posed above. For instance, Gundert and Luria have shown that our construction does not yield cosystolic expanders, though this does not necessarily preclude the possibility that it satisfies the topological expansion property. Moreover, despite the negative result in the last paragraph above, the construction was recently extended to higher uniformities in~\cite{CTZ18}.

\vspace{5mm}
\noindent
{\bf Acknowledgements.} The author gratefully acknowledges the support of the Simons Institute for the Theory of Computing during part of the period when this paper was written. The author is also indebted to Noga Alon, who brought the problem of constructing high-dimensional expanders to his attention, and to Rajko Nenadov, Jonathan Tidor and Yufei Zhao for several valuable discussions.

\end{document}